\theoremstyle{definition}
\def\fnum{equation}
\newtheorem{Thm}[\fnum]{Theorem}
\newtheorem{Cor}[\fnum]{Corollary}
\newtheorem{Lem}[\fnum]{Lemma}
\newtheorem{Pro}[\fnum]{Proposition}
\numberwithin{equation}{section}
\newcommand{\Vol}{{\text{Vol}}}
\newcommand{\Hess}{{\text {Hess}}}
\def\RR{{\bold R}}
\def\SS{{\bold S}}
\newcommand{\e}{{\text {e}}}
\newcommand{\cW}{{\mathcal{W}}}
\newcommand{\cS}{{\mathcal{S}}}
\newcommand{\eqr}[1]{(\ref{#1})}
\title{Deficit functions and the log Sobolev inequality}
\author[]{Tobias Holck Colding}%
\address{MIT, Dept. of Math.\\
77 Massachusetts Avenue, Cambridge, MA 02139-4307.}
\author[]{William P. Minicozzi II}%
\thanks{The  authors
were partially supported by NSF  DMS Grants   2405393 and 2304684.}
\email{colding@math.mit.edu  and minicozz@math.mit.edu}
\begin{document}

\maketitle

\begin{abstract}
There is a long history of parabolic monotonicity formulas that developed independently from several different fields and a much more recent
elliptic theory.  The elliptic theory can be localized and there are additional monotone quantities.  There is also a surprising link: Taking a high-dimensional limit of the right elliptic monotonicity can give a parabolic one as a limit.   
Poincar\'e was the first to observe such a connection.    We introduce two deficit functions, one elliptic and one parabolic, then show that the parabolic deficit is 
pointwise the limit of the elliptic  
and, that the elliptic satisfies an equation that converges   to the equation for the parabolic.
These pointwise quantities and their equations recover the monotonicities and leads to an elliptic proof of the log Sobolev inequality as well as new concentration of measure phenomena.
\end{abstract}

\section{Introduction}

There is a rich theory of entropy monotonicity for the heat equation that links PDE, probability, statistical mechanics, information theory, and a number of sharp functional inequalities, see, e.g., \cite{Bj, Le3, V}.
The link comes from regarding a positive solution $u$ of the heat equation as a probability measure and studying its evolution over time.  There are  a number of monotonicity formulas, starting with the  Shannon-Boltzmann entropy and its derivative, the Fisher information.
If one combines monotonicity with the fact that $u$ looks more and more like a scaled gaussian (i.e., the central limit theorem), then one deduces sharp functional inequalities
such as the log Sobolev inequality.
 
There is a parallel theory of elliptic monotonicities  discovered much more recently, starting with \cite{C}; see \cite{CM1,CM2,AFM,AMO,AMMO}.  These elliptic monotonicities can be localized in a way that
the parabolic cannot, giving rich additional structure.   
Moreover, there is often a way to obtain parabolic results as a limit of  elliptic ones on high dimensional spaces.  The first example of this was the following observation of Poincar\'e, \cite{Po}, from 1912; cf. \cite{Bo, Mi}:  
\vskip1mm
\begin{quote}
A gaussian measure on $\RR^n$ may be viewed as a limit, where $N\to\infty$, of measures induced by orthogonal projection from $\RR^N$ to $\RR^n$ of the  uniform probability measure on $\sqrt{2N}\,\SS^{N-1}$.  
\end{quote}
\vskip1mm
The functional version of Poincar\'e's observation is that integrating a function $f(x)$ on $\RR^n$ with respect to the gaussian measure is asymptotically the same as integrating $f(x,y)=f(x)$ on $(x,y)\in \sqrt{2N}\,\SS^{N-1}$ with respect to the uniform probability measure.  
Motivated by Poincar\'e's idea, McKean, \cite{M}, interpreted Wiener measure as the uniform distribution on an infinite  dimensional spherical surface of infinite radius.     Borell, \cite{Bo},   used Poincar\'e's projection of the spherical measure from $\sqrt{2N}\,\SS^{N-1}$.   By letting $N\to \infty$, Borell{\footnote{Our normalization differs from the one that Borell uses because we use a gaussian of a different scale.}}
 obtained the gaussian Brunn-Minkowski inequality on $\RR^n$ from a spherical Brunn-Minkowski inequality, \cite{Sc}, on 
$\sqrt{2N}\,\SS^{N-1}$.  Another example was given by Perelman who showed that his monotonicity of  reduced volume was the limit of the Bishop-Gromov volume comparison; see \cite{P}, cf. \cite{T, D}.
In \cite{BR}, Bustamante and Reiris showed that Perelman's entropy monotonicity, \cite{P}, for the Ricci flow was a  limit of  
Colding's monotonicities for harmonic functions, \cite{C}.  We will further develop this perspective here, leading to new results and giving a new understanding of some classical 
results, including   the log Sobolev inequality, \cite{Gr}.

The reason that the parabolic quantities involve gaussian integrals, while the elliptic ones are typically polynomial, comes from concentration of measure phenomena on high dimensional spheres and the gaussian decay that occurs away from the equator, cf. \cite{Le1, Le2, Mi, N, van}.
There is a related feature that comes up here, where we must understand the volumes of intersections of large dimensional spheres.  This leads to new concentration phenomena that play a key role in our arguments.

\subsection{Deficit functions}

We will define   functions $D_0$ and $d_0$, that we  call deficit functions,  
that capture the behavior of the heat equation and Laplace equation.  Both  functions   satisfy natural differential equations, 
leading to estimates and monotonicity formulas.
Theorem \ref{t:010} will show that the high-dimensional limit of the elliptic deficit $d_0$ gives the parabolic deficit $D_0$ and  the elliptic equation for $d_0$ 
converges to the parabolic equation for $D_0$.  

Let $M^n$ be an $n$-dimensional manifold.  For a positive function $u(x,t)$ on $M\times (0,\infty)$ define $f=f(x,t)$ and $D_0=D_0(x,t)$ by
\begin{align} \label{e:defu1}
u&=t^{-\frac{n}{2}}\,\exp\,\left(-f\right)\,   ,\\ 
D_0&=t\,\left(|\nabla_x f|^2+f_t\right)+(t\,f)_t\,   . \label{e:newDH1}
\end{align}
When $M$ is a closed $n$-dimensional manifold with non-negative Ricci curvature, the Li-Yau differential Harnack inequality, \cite{LY}, for a positive solution $u$ to the heat equation $\Box\,u=0$ (where $\Box\,u=(\partial_t-\Delta)\,u$) is the sharp inequality that 
\begin{align}
\Delta\,f - \frac{n}{2t} =|\nabla_x f|^2+f_t\leq  0 \,  .
\end{align}
The key in the proof  is that $t\,\Delta\,f$ is a sub-solution to an in-homogenous drift heat equation.  

  We will call $D_0$  {\it{the parabolic deficit function}} for reasons that will become clear.   Even though it is not part of the Li-Yau argument, it is a very natural quantity.  First, it is a sub-solution to a homogeneous drift heat equation.   
    For a general solution $u$,  
\begin{align}   \label{e:boxD0}
\Box\,(D_0 \,u)&=-2\,t\,\left| \Hess_f-\frac{\delta_{ij}}{2\,t} \right|^2\,u\,   .
\end{align}
  Second, it is constant on the Euclidean heat kernel  $u(x,t)=(4\, \pi \, t)^{-\frac{n}{2}}\,\e^{-\frac{|x|^2}{4\,t}}$, 
  with $f=\frac{|x|^2}{4\,t}+ \frac{n}{2} \, \log 4\pi$ and $D_0\equiv  \frac{n}{2} \, \log 4\pi$. 
  When  $\int u=1$, the integral of $D_0$ is related to the $\cW$ functional{\footnote{The $\cW$ functional is equal to $t\int \frac{|\nabla u|^2}{u}-\int u\,\log u - \frac{n}{2} \, \log t$, \cite{P}.}}
 by
\begin{align}
	\int D_0 \, u = \cW - n  
	\, .\label{e:logsobD0}
\end{align}
When one integrates $D_0$ against the weight $u$, the part of the integral that comes from $t\,f_t\,u$ contributes in a trivial way, whereas the part that comes from integrating $f\,u$ contributes with the Shannon-Boltzmann entropy.  However,  both are crucial to include.

To explain how the parabolic situation follows from the elliptic, let $M^n$ be an $n$-dimensional manifold and $\RR^N$ a large dimensional Euclidean space (for simplicity, assume that $M=\RR^n$, though it works much more generally).  We will use coordinates $x\in M$ and $y\in \RR^N$.  For a  solution $u>0$ to the heat equation on $M\times (0,\infty)$ define $v$ and $b=b(x,y)$ on $M\times \RR^N$ by
\begin{align}
v(x,y)&=r^{2-m}\,\exp\,(-f(x,\tau))  \label{e:othervxy}
\,  ,\\
b&=v^{\frac{1}{2-m}}\,  , \label{e:defu3}
\end{align}
where $r=|y|$ and $\tau=\frac{r^2}{2\,N}$.  
When $u$ satisfies the heat equation on $M$, then $v$ is harmonic on $M\times \RR^N$ up to higher order terms (as was observed by Perelman, \cite{P}).  
 The key link is what we call {\it{the (elliptic) deficit function}} $d_0$ defined by
\begin{align}	\label{e:defd0}
d_0(x,y) &= 2\, m \, [|\nabla b|^2-1]\,  .
\end{align}
If $M$ has nonnegative Ricci curvature and $w>0$ is a harmonic function appropriately normalized, then the gradient estimate in \cite{C} show that $|\nabla b|^2-1\leq 0$, where $b=w^{\frac{1}{2-n}}$, with equality if and only if $w$ is the Green's function and $M$ Euclidean space.  
This suggests that $d_0$ should be $\leq 0$ (at least asymptotically) with equality only when $M$ is Euclidean space and $v$ is the Green's function.  It is in this sense that $d_0$ is the deficit.      

Equation \eqr{e:key2} below relates the elliptic and parabolic deficit functions.  Equation \eqr{e:key1} shows that the elliptic deficit satisfies an elliptic equation and \eqr{e:key3} that  asymptotically this is the one of the parabolic deficit.   In this theorem, $B = \Hess_{b^2}-\frac{\Delta\,b^2}{m}\,\delta_{ij}$. 

\begin{Thm} 	\label{t:010}
If  $\frac{|y|^2}{2\, N} = \tau \in [\tau_1 , \tau_2]$, then 
\begin{align}
  \frac{\Delta\,(d_0\,v)}{v} - \frac{m}{b^2} \, |B|^2 &=O(N^{-1}) \,  ,\label{e:key1}\\
 d_0 -4\,D_0 &=O(N^{-1})\,  ,\label{e:key2}\\
 \frac{\Delta\,(d_0\,v)}{v} +  4\, \frac{\Box\,(D_0\,u)}{u}    &=O(N^{-1})\,  ,\label{e:key3}
\end{align}
where $d_0$ and $v$ are evaluated at $(x,y)$, while $D_0$ and $ u$ are at $(x , \tau)$.
\end{Thm}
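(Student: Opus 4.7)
The plan is to establish \eqr{e:key2}, \eqr{e:key1}, and \eqr{e:key3} in that order. The hypothesis $\tau\in[\tau_1,\tau_2]$ forces $r\sim\sqrt{N}$ and yields uniform bounds on $f$ and its derivatives, so ``$O(N^{-1})$'' is to be read as an explicit error obtained by keeping the first two terms of a $1/N$-expansion. The constant $m$ is chosen (with $m-N$ bounded, presumably $m=n+N$ for the ambient Green-function scaling) so that $v$ is approximately harmonic on $M\times\RR^N$. With this, \eqr{e:key2} is pure algebra: since $b=v^{\frac{1}{2-m}}=r\,\exp\!\big(\tfrac{f(x,\tau)}{m-2}\big)$ and $\tau=\tfrac{r^2}{2N}$, splitting $|\nabla b|^2=|\nabla_xb|^2+|\nabla_yb|^2$ and using that $b$ depends on $y$ only through $r$ and $\tau$ gives
\begin{align*}
|\nabla b|^2=\e^{\frac{2f}{m-2}}\left[\left(1+\frac{2\tau f_\tau}{m-2}\right)^{\!2}+\frac{2N\tau}{(m-2)^2}|\nabla_xf|^2\right].
\end{align*}
Expanding in $1/(m-2)$, subtracting $1$, and multiplying by $2m$, the coefficients $2m/(m-2)$ and $4mN/(m-2)^2$ are $2+O(N^{-1})$ and $4+O(N^{-1})$ respectively, giving $d_0=4f+8\tau f_\tau+4\tau|\nabla_xf|^2+O(N^{-1})$. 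Using $(tf)_t=f+tf_t$ to expand $D_0=f+2tf_t+t|\nabla_xf|^2$, this equals $4D_0|_{t=\tau}+O(N^{-1})$.

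For \eqr{e:key1}, I would follow the elliptic deficit identity of \cite{C} (cf.\ \cite{CM1,CM2}). In its exact form, if $v$ were harmonic on $M\times\RR^N$ and $b=v^{\frac{1}{2-m}}$, Bochner's formula combined with the harmonicity constraint $\Delta b/b=(m-1)|\nabla b|^2/b^2$ would yield $\Delta(d_0v)/v=(m/b^2)|B|^2$ exactly. In our setting $v$ is only approximately harmonic, but the correction enters solely through the identity
\begin{align*}
\frac{\Delta b}{b}=(m-1)\,\frac{|\nabla b|^2}{b^2}+\frac{\Delta v}{(2-m)\,v}.
\end{align*}
Starting from $v=r^{2-m}\e^{-f(x,\tau)}$ and expanding $\Delta v$ in $y$-spherical coordinates using $\Delta_yG(r)=G''(r)+(N-1)G'(r)/r$, the leading-order terms in $\Delta v/v$ cancel precisely because $u$ satisfies $\Box u=0$ (this uses $\Delta_xf=|\nabla_xf|^2+f_t+n/(2t)$), leaving $\Delta v/v=O(N^{-1})$ and therefore $\Delta v/((2-m)v)=O(N^{-2})$. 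Tracking this through Colding's identity and multiplying by the normalization $2m\sim 2N$ in $d_0=2m(|\nabla b|^2-1)$ produces exactly the $O(N^{-1})$ discrepancy announced in \eqr{e:key1}.

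The identity \eqr{e:key3} follows from \eqr{e:key2} together with a change-of-variables principle: for any smooth $F(x,\tau)$,
\begin{align*}
\frac{\Delta(F(x,\tau)\,v)}{v}=-\frac{\Box(F(x,t)\,u)}{u}\bigg|_{t=\tau}+O(N^{-1}).
\end{align*}
To prove this, expand $\Delta(Fv)/v=\Delta F+2\langle\nabla F,\nabla v\rangle/v+F\,\Delta v/v$ and compute each piece: $\nabla_xv/v=-\nabla_xf$, $\nabla_yv/v=(2-m)y/r^2-f_\tau\,y/N$, $\nabla_yF=F_\tau\,y/N$, and $\Delta_yF=F_\tau+O(N^{-1})$. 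Using $(2-m)/N=-1+O(N^{-1})$, the approximate harmonicity of $v$, and the heat equation for $u$, these combine to $-\Box F-2\nabla_xf\cdot\nabla_xF+O(N^{-1})=-\Box(Fu)/u|_{t=\tau}$. Applying this with $F=D_0$ and invoking \eqr{e:key2} yields \eqr{e:key3}. The main obstacle is \eqr{e:key1}: executing Colding's identity quantitatively requires verifying the cancellation in $\Delta v/v$ down to order $N^{-2}$, since the factor $2m\sim 2N$ in $d_0$ would otherwise inflate the error to $O(1)$.
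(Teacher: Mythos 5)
Your treatment of \eqr{e:key2} is correct and matches the paper's Proposition \ref{c:336}: both expand $\nabla b$ in powers of $N^{-1}$ (you compute $|\nabla b|^2$ directly, the paper expands $\nabla b$ then squares; same content).

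For \eqr{e:key1}, however, what you have is a plan plus an unresolved doubt, not a proof, and the doubt itself is a misdiagnosis. You close by saying the main obstacle is ``verifying the cancellation in $\Delta v/v$ down to order $N^{-2}$,'' but Lemma \ref{l:expandDeltav} shows that $\Delta v/v$ is genuinely $O(N^{-1})$ with an explicit nonzero $N^{-1}$-coefficient --- no cancellation to $O(N^{-2})$ occurs or is needed. The paper instead derives an \emph{exact} differential identity for $d=2m|\nabla b|^2$ via Bochner (Lemma \ref{l:dfct}), so that the sole correction enters through $\psi=b^m\Delta v$ and its gradient (Corollary \ref{c:deficit1}), and then the $O(N^{-1})$ bound comes not from $\Delta v/v$ being one order better, but from the smallness of the \emph{prefactors}: $\psi=b^2\,\Delta v/v=O(1)$, divided by $b^2=O(N)$ in the zero-order terms, and in the gradient term $\frac{4m}{2-m}\langle\nabla\log b,\nabla\psi\rangle$ one needs the anisotropic bounds $|\nabla_x b|=O(N^{-1/2})$, $|\nabla_y\psi|=O(N^{-1/2})$ (Lemma \ref{l:bmdeltau}), together with $b=O(\sqrt{N})$, to land at $O(N^{-1})$. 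That gradient estimate on $\psi$ is the genuinely nontrivial ingredient; your sketch does not produce it, and your heuristic (``multiply the $O(N^{-2})$ term $\Delta v/((2-m)v)$ by $2m$'') does not account for the drift term at all.

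For \eqr{e:key3} you take a genuinely different route from the paper. The paper proceeds from \eqr{e:key1} by computing the asymptotics of $|B|^2$ (Lemma \ref{l:matrixasy}), showing $\frac{m}{b^2}|B|^2 = 8\tau\,|\Hess_f-\tfrac{1}{2\tau}\delta|^2+O(N^{-1})$, and independently computing $\Box(D_0u)/u=-2t|\Hess_f-\tfrac{1}{2t}\delta|^2$ --- both sides are identified with the same explicit trace-free Hessian quantity. Your change-of-variables principle $\Delta(Fv)/v = -\Box(Fu)/u|_{t=\tau}+O(N^{-1})$ is a nice and correct observation (I checked the computation), and it bypasses the $|B|^2$ asymptotics entirely. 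However, the final step ``invoking \eqr{e:key2}'' is not enough as stated: you need $\Delta\big((d_0-4D_0)\,v\big)/v=O(N^{-1})$, which requires the error $d_0-4D_0$ to be $O(N^{-1})$ in a $C^2$ sense on the slab $\tau\in[\tau_1,\tau_2]$, not merely pointwise. This is plausible from the explicit expansion (the error is $N^{-1}$ times a smooth function of $(x,\tau)$ plus $O(N^{-2})$), but it has to be stated and checked; as written it is a gap.

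In short: \eqr{e:key2} is fine; \eqr{e:key1} is incomplete and the perceived obstacle is off-target --- what is actually needed is the differential identity of Lemma \ref{l:dfct} together with the gradient estimates of Lemma \ref{l:bmdeltau}, not a higher-order cancellation in $\Delta v/v$; \eqr{e:key3} uses a legitimately different and elegant reduction, but needs a derivative-level version of \eqr{e:key2} to close.
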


Monotonicity of $\cW$ for the heat equation can be deduced from \eqr{e:boxD0} together with  \eqr{e:logsobD0}, but  can also be seen from \eqr{e:key1}, \eqr{e:key2} by taking the limit as the dimension $N\to \infty$.  The logarithmic Sobolev inequality  can be deduced from \eqr{e:boxD0}, \eqr{e:logsobD0} and the central limit theorem, but it can also be seen from \eqr{e:key1}, \eqr{e:key2} together with concentration of measures in high dimensions.  Equation \eqr{e:key3} links asymptotically the infinitesimal versions of the two monotonicities.

Bustamante and Reiris, \cite{BR}, showed that the parabolic monotonicity followed from the elliptic.  Their result was, in a sense, an integrated version of what we show here.  In their argument,
 the manifolds need to be closed.  In contrast, our arguments are local.  We show that the quantities here satisfy differential equations and we can therefore allow non-compact manifolds like Euclidean space. In fact, the formulas we show here reveal a relationship between the two deficit functions and their equations.
 
\subsection{The log Sobolev inequality as an elliptic limit}
 The log Sobolev inequality, \cite{Gr}; cf. \cite{Fe, St},   is  a powerful functional inequality that is equivalent to a lower bound for $\int u \, D_0$.  Since 
$d_0 \to 4\, D_0$, there is a surprising link between the elliptic monotonicities and the log Sobolev.  In particular, the log Sobolev would follow from an asymptotic lower bound for the elliptic deficit function
\begin{align}	\label{e:eLS1}
	d_0 \geq 2n \, \log 4\pi + O(N^{-1}) \, .
\end{align}
This is essentially what we will prove, though in a weak sense.  
The deficit function $d_0$ involves one derivative, but there is also a lower regularity $C^0$ elliptic approach using the ``almost harmonic'' function $v$.  
Namely, the log Sobolev is a consequence (see Proposition \ref{p:equiv})
of showing that if
$R^2 = 2 \, N \, \tau$, $|\bar{y}| = R$ and $\bar{z} = (0,\bar{y})$, then
\begin{align}	\label{e:elliptLS}
	   \lim_{N \to \infty} \, R^{m-2} \, v(\bar{z})  
	=  \left( 4 \, \pi \right)^{ - \frac{n}{2}}  \, \int u(x,0) \, \e^{ - \frac{|x|^2}{4 \, \tau}} \, dx  
	 \, .
\end{align}
We will prove \eqr{e:elliptLS}, and thus the log Sobolev, using the function $v$ and taking the  limit as the dimension goes to infinity.  

\begin{Thm}	\label{p:repnf}
Equation  \eqr{e:elliptLS} holds and, from this, the log Sobolev inequality follows.
\end{Thm}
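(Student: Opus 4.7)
The plan is twofold: prove \eqref{e:elliptLS} as a high-dimensional limit using the almost-harmonic function $v$ on $\RR^n \times \RR^N$, then invoke Proposition~\ref{p:equiv} to pass to the log Sobolev inequality.

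For \eqref{e:elliptLS}, the direct computation from the definitions gives
\[
R^{m-2}\, v(\bar z) = R^{m-2} \cdot R^{2-m}\, e^{-f(0,\tau)} = e^{-f(0,\tau)} = \tau^{n/2}\, u(0,\tau),
\]
since $e^{-f(x,t)} = t^{n/2}\, u(x,t)$.  To realize this as a genuine elliptic limit I would exploit Perelman's observation that $v$ is harmonic on $\RR^n \times \RR^N$ away from $\Sigma = \RR^n \times \{0\}$ up to error $O(N^{-1})$, with transverse behavior $v(x,y) \sim (2N)^{-n/2}\, u(x,0)\, |y|^{2-N}$ near $\Sigma$ (since $\tau = |y|^2/(2N)$ and $m-n=N$).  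This identifies a charge density on $\Sigma$ proportional to $u(x,0)$, and a Green-function representation on $\RR^m$ gives
\[
v(\bar z) \approx \frac{(N-2)\,\omega_{N-1}\,(2N)^{-n/2}}{(m-2)\,\omega_{m-1}} \int_{\RR^n} u(x,0)\, (|x|^2 + R^2)^{(2-m)/2}\, dx + O(N^{-1}).
\]
Multiplying by $R^{m-2}$ rewrites the kernel as $(1 + |x|^2/R^2)^{(2-m)/2}$, which converges pointwise to $e^{-|x|^2/(4\tau)}$ since $(m-2)/(2R^2) \to 1/(4\tau)$; meanwhile Stirling applied to $\omega_{N-1}/\omega_{m-1}$ shows the prefactor tends to $(4\pi)^{-n/2}$.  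Dominated convergence then yields \eqref{e:elliptLS}, with the Gaussian emerging as the concentration of uniform measure on $\SS^{m-1}_R$ projected to $\RR^n$ (Poincar\'e's observation).

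For the log Sobolev step, Proposition~\ref{p:equiv} translates \eqref{e:elliptLS} into the inequality.  The bridge is \eqref{e:logsobD0}: since $\int u\, D_0 = \cW - n$ and $D_0 \equiv \tfrac{n}{2}\log 4\pi$ on the Euclidean heat kernel, the Gaussian representation of $\tau^{n/2}\, u(0,\tau) = e^{-f(0,\tau)}$ yields the sharp lower bound $\cW \geq n + \tfrac{n}{2}\log 4\pi$, which is the log Sobolev inequality in its $\cW$-form.

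The hard part will be the rigorous passage to the limit in the Green-function representation: justifying the interchange of $N \to \infty$ with the integral via dominated convergence (uniform tail bounds for $(1 + |x|^2/R^2)^{(2-m)/2}$ against the growth of $u(x,0)$), as well as controlling the $O(N^{-1})$ non-harmonicity of $v$ so that it does not interfere with the pointwise limit of $R^{m-2}\, v(\bar z)$.  These are the technical heart of the concentration-of-measure phenomenon that converts the spherical setting on $\SS^{m-1}_R$ into a Gaussian integral on $\RR^n$.
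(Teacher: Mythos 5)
Your initial observation that $R^{m-2}\,v(\bar z) = \tau^{n/2}\,u(0,\tau)$ exactly (with no $N$-dependence on the left) is correct and worth stating: it makes clear that \eqref{e:elliptLS} is really the heat kernel representation formula, and the whole point is to recover it by purely elliptic means. Your Green-function route is a genuinely different elliptic strategy from the paper's. The paper instead applies the \emph{solid mean value property} to $v$ over an off-center ball $B_{\beta R}(\bar z)$ (Lemma~\ref{l:setitup}), explicitly bounds the error from $\Delta v \neq 0$ using $|\Delta v| \leq \frac{c}{m}(2N)^{-n/2}|y|^{2-N}$ together with the mean value property of $|y|^{2-N}$, and then evaluates the ball integral by slicing along $|y|=r$ (Lemma~\ref{c:slicingit}), reducing to asymptotics of the weight $h(x,r)=r\,\gamma(\theta(x,r))$ (Proposition~\ref{c:mux}) and a concentration estimate (Lemma~\ref{l:Urx}). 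Your route by contrast writes $v$ as the Newtonian potential of a charge density on $\Sigma=\RR^n\times\{0\}$ and takes $N\to\infty$ in the kernel $(1+|x|^2/R^2)^{(2-m)/2}\to \e^{-|x|^2/(4\tau)}$, with the prefactor controlled by Stirling. That is the same concentration phenomenon seen from a different angle, and your constant bookkeeping checks out.

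The gap is real and you have named it yourself, but let me be precise about why it is not a routine dominated-convergence step. The Green representation
$v(\bar z) \approx \int_{\RR^n}\rho(x')\,|\bar z - (x',0)|^{2-m}\,dx'$
is an identity only for functions that are (i) \emph{exactly} harmonic off $\Sigma$ and (ii) uniquely determined by the prescribed singular behavior along $\Sigma$, typically via a decay condition at infinity excluding an extra globally harmonic summand. Here $v$ fails (i) by an $O(N^{-1})$ error, and you have not addressed (ii) at all; indeed $v$ need not have the global decay needed to rule out extra harmonic pieces, since $u(\cdot,0)$ is only assumed to be a (suitably integrable) initial datum. To repair this you would need either a representation that includes the volume term $\int G(\bar z,\cdot)\,\Delta v$ together with an explicit bound showing it contributes $O(N^{-1})$ after multiplying by $R^{m-2}$ --- which is essentially what the paper does, but through the mean value identity \eqref{e:gottacomeback}, which is local and avoids the uniqueness issue entirely --- or a separate uniqueness/maximum-principle argument in $\RR^m\setminus\Sigma$. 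As written, the leap from ``$v$ has singular profile $\sim(2N)^{-n/2}u(x,0)|y|^{2-N}$'' to ``$v$ equals the Green potential of this density'' is unjustified, and that leap is doing all the work.

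A minor point on the second half: invoking Proposition~\ref{p:equiv} is exactly right, but your side remark that \eqref{e:logsobD0} is ``the bridge'' and ``the Gaussian representation yields $\cW\geq n+\frac n2\log 4\pi$'' misstates the mechanism. Proposition~\ref{p:equiv} uses \eqref{e:elliptLS} to show the rescaled solution $\tilde u(x,t)=t^{n/2}u(\sqrt t\,x,t)$ converges to the standard Gaussian, hence $\limsup_{t\to\infty}\tilde\cS\geq\frac n2+\frac n2\log 4\pi$, and then combines this with monotonicity of $\cW$ and the relation $\cW=(t\tilde\cS)'+\frac n2$. Equation \eqref{e:logsobD0} only enters to explain why $\int u\,D_0$ is the relevant quantity; it is not the bridge from \eqref{e:elliptLS} to the inequality.
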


The elliptic setting is richer than the parabolic in the sense that   high dimensional limits of elliptic monotonicities yield a single parabolic monotonicity, but there is a variety of different elliptic ones and the elliptic ones can be localized in a way that disappears in the limit.
The log Sobolev is an illustration of this.

\section{Differential equation for the deficit}

 The next theorem  gives the differential equation for the elliptic deficit function $d_0$ defined in \eqr{e:defd0}, establishing \eqr{e:key1} in Theorem \ref{t:010}.

\begin{Thm}   \label{p:ld}
If  $\frac{|y|^2}{2\, N} = \tau \in [\tau_1 , \tau_2]$, then 
\begin{align}
	\left| \frac{\Delta\,(d_0\,v)}{v} - \frac{m}{b^2} \, |B|^2 \right| \leq \frac{C_2}{N\, \sqrt{\tau_1}} + \frac{C_2}{N\, \tau_1} + \frac{C_2\, |d_0|}{N\, \tau_1}  \,  ,
\end{align}
where $C_2$ depends on $\tau_2 , f$ and $n$.
\end{Thm}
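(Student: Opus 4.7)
The plan is to split
\[
  \Delta(d_0\,v) = d_0\,\Delta v + 2\,\langle \nabla d_0, \nabla v \rangle + v\,\Delta d_0
\]
into the part that would appear if $v$ were exactly harmonic on $M \times \RR^N$, which reproduces the target quantity $(m/b^2)\,|B|^2$, plus an error driven by how much $v$ fails to be harmonic. The first task is to quantify that failure: using $\nabla_y r = y/r$, the identity $\Delta_{\RR^N} r^\alpha = \alpha\,(\alpha+N-2)\,r^{\alpha-2}$, and the chain rule through $\tau = r^2/(2\,N)$, one writes $\Delta v / v$ as a combination of $f_\tau$, $f_{\tau\tau}$, $|\nabla_x f|^2$, $\Delta_x f$, and $n/(2\,\tau)$. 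The heat equation for $u$, in the form $|\nabla_x f|^2 - \Delta_x f = -n/(2\,\tau) - f_\tau$, cancels the $O(1)$ and $O(1/\tau)$ contributions, leaving $\Delta v/v = O(1/N)$ with constants depending on $\tau_2$, $n$, and finitely many derivatives of $f$. The first piece of the splitting is then $d_0\,\Delta v/v = O(|d_0|/(N\,\tau_1))$ once $\tau \ge \tau_1$ is used to bound the remaining $1/\tau$ factors, which accounts for the third term in the stated estimate.

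The second task is the exact elliptic identity on Euclidean $\RR^m$: if $v > 0$ is harmonic, then $\Delta(d_0\,v)/v = (m/b^2)\,|B|^2$. Here $\Delta v = 0$ forces $\Delta b = (m-1)\,|\nabla b|^2/b$ and $\Delta b^2 = 2m\,|\nabla b|^2$, so $B$ is the trace-free part of $\Hess_{b^2}$. The Bochner formula
\[
  \Delta |\nabla b|^2 = 2\,|\Hess b|^2 + 2\,\langle \nabla b, \nabla \Delta b\rangle
\]
combined with $\nabla v / v = (2-m)\,\nabla b/b$ reduces $2\,\langle \nabla d_0, \nabla v\rangle + v\,\Delta d_0$ to $(m/b^2)\,|B|^2\,v$ after an algebraic manipulation standard in the elliptic monotonicity literature.

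The third task is to rerun the Bochner computation when $v$ is only almost harmonic. Differentiating $b = v^{1/(2-m)}$ and $b^2 = v^{2/(2-m)}$ shows that $\Delta b$ and $\Delta b^2$ each acquire an extra piece proportional to $\Delta v/v$, of size $O(b/N)$ and $O(b^2/N)$ respectively. Inserting these corrections into the harmonic-case computation generates cross terms in which the $O(1/N)$ error multiplies either $|\nabla b|$ or $|\Hess b|$. Using that $b \sim r \ge \sqrt{2\,N\,\tau_1}$ on the annulus, these convert to contributions of size $C_2/(N\,\sqrt{\tau_1})$ and $C_2/(N\,\tau_1)$, matching the first two terms of the claimed bound.

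The main obstacle is the bookkeeping in the third step. Many intermediate quantities carry negative powers of $r$, $\tau$, or $b$, so it is not \emph{a priori} clear that the seemingly $O(1)$ contributions cancel and that what remains is uniformly $O(1/N)$ with polynomial dependence on $1/\tau_1$. What makes the cancellations work is that every potentially large term that appears is exactly the type killed by the heat equation for $u$; thus the heat equation must be invoked not only at the start (to handle $\Delta v/v$) but also inside the Bochner step, when the corrections to $\Delta b$ and $\Delta b^2$ are expanded, to ensure that the constants absorb into a single $C_2$ depending only on $\tau_2$, $n$, and finitely many derivatives of $f$.
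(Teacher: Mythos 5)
Your overall strategy matches the paper's: do a Bochner computation on $b^2$ to reduce $\Delta(d_0 v)/v$ to $(m/b^2)|B|^2$ plus error terms driven by $\psi := b^m\Delta v$, then use the heat equation (Lemma \ref{l:expandDeltav}) to show $\Delta v/v = O(N^{-1})$ and control those errors. The paper organizes it slightly differently --- Lemma \ref{l:dfct} and Corollary \ref{c:deficit1} carry $\psi$ through the Bochner identity exactly from the start, rather than doing the harmonic case and then ``rerunning'' with corrections --- but that is a cosmetic difference.

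However, there is a genuine gap in your third step. The exact identity you would arrive at (Corollary \ref{c:deficit1}) contains the error term
\[
  \frac{4m}{2-m}\,\langle \nabla \log b, \nabla \psi \rangle,
\]
and this is not a term where ``the $O(1/N)$ error multiplies $|\nabla b|$''. It involves $\nabla\psi$, not $\psi$ itself. With only the isotropic bounds $|\nabla b| = O(1)$, $|\nabla \psi| = O(1)$, and $b \sim \sqrt{2N\tau}$, you would estimate
\[
  \left| \frac{4m}{2-m}\,\langle \nabla \log b, \nabla \psi \rangle \right|
  \lesssim \frac{|\nabla b|\,|\nabla\psi|}{b} = O\!\left(\frac{1}{\sqrt{N\tau}}\right),
\]
which is $O(N^{-1/2})$, an order of magnitude worse than the claimed $O(N^{-1})$. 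What rescues the estimate in the paper is the \emph{anisotropic} scaling of the $x$- and $y$-derivatives established in Lemma \ref{l:bmdeltau}: $|\nabla_y b|$ and $|\nabla_x \psi|$ are $O(1)$, but $|\nabla_x b|$ and $|\nabla_y \psi|$ are $O(N^{-1/2})$, because $b$ depends on $x$ only through the slowly varying factor $\e^{f/(m-2)}$ while $\psi$ depends on $y$ only through $\tau = |y|^2/(2N)$. Pairing the bounded gradient of one factor with the small gradient of the other in each of the two blocks gives $|\langle \nabla\log b, \nabla\psi\rangle| = O(N^{-1}\tau_1^{-1/2})$, which is the source of the $C_2/(N\sqrt{\tau_1})$ term in the statement. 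Your proposal recognizes that there is bookkeeping to do but does not identify or prove this directional separation, and without it the argument does not close.

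Your estimates for the remaining error terms ($d_0\psi/b^2$, $\psi/b^2$, $\psi^2/b^2$), which do not involve $\nabla\psi$, are fine once one has $|\psi| \le C$ and $b^2 \gtrsim N\tau_1$, and your first step (computing $\Delta v/v$ and using the heat equation to cancel the $O(1)$ and $O(1/\tau)$ pieces) is exactly Lemma \ref{l:expandDeltav}.
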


\subsection{Adding dimensions}

We begin with Perelman's observation that
$v$  is harmonic on the space $M\times \RR^N$ up to higher order terms (see page 13 in \cite{P} and proposition 4.2 in \cite{BR}):

\begin{Lem}	\label{l:expandDeltav}
If $\Box \, u = 0$,  $v$ is given by \eqr{e:othervxy}, 
and we set $\tau = \frac{|y|^2}{2\, N}$, then  
\begin{align}	
	\frac{\Delta \, v}{v} &= \frac{4\, u_t}{N\, u} + \frac{2\, \tau \, u_{tt}}{N\, u}
	 =   \frac{(n-2)\, n }{2\, N \, \tau}      +
	2\, (n - 2) \,  \frac{f_t}{N}
	+ \frac{(f_t^2 - f_{tt}) \, 2 \, \tau }{N}     \, . 
\end{align}
\end{Lem}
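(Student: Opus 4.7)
My plan is to first rewrite $v$ so that Perelman's near-harmonicity becomes transparent, then compute $\Delta v$ directly using the heat equation, and finally verify the second equality by an elementary differentiation of $u = t^{-n/2}\, e^{-f}$. The key observation is that, with $m = n+N$ and $\tau = r^2/(2N)$,
\[
v(x,y) \;=\; r^{2-m}\, e^{-f(x,\tau)} \;=\; r^{2-m}\, \tau^{n/2}\, u(x,\tau) \;=\; (2N)^{-n/2}\, r^{2-N}\, u(x,\tau),
\]
and $r^{2-N}$ is harmonic on $\RR^N \setminus \{0\}$. This rewriting is what makes the first equality essentially automatic.

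Splitting $\Delta = \Delta_x + \Delta_y$: the $x$-part produces $(2N)^{-n/2}\, r^{2-N}\, \Delta_x u$, which by $\Box u = 0$ equals $(2N)^{-n/2}\, r^{2-N}\, u_t$ at $t = \tau$. For the $y$-part I use $\Delta_y r^{2-N} = 0$, $\Delta_y \tau = 1$, $|\nabla_y \tau|^2 = 2\tau/N$, and the cross term $\nabla_y r^{2-N} \cdot \nabla_y u(x,\tau) = \tfrac{2-N}{N}\, r^{2-N}\, u_\tau$; these combine to
\[
\Delta_y v = (2N)^{-n/2}\, r^{2-N}\, \Bigl[\,\frac{2(2-N)}{N}\, u_\tau + u_\tau + \frac{2\tau}{N}\, u_{\tau\tau}\,\Bigr].
\]
Adding the $x$-contribution, the coefficient of $u_\tau$ collapses to $4/N$, and dividing by $v$ gives the first equality. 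For the second equality, I differentiate $\log u = -\tfrac{n}{2}\log t - f$ to obtain $u_t/u = -\tfrac{n}{2t} - f_t$ and $u_{tt}/u = \tfrac{n}{2t^2} - f_{tt} + \bigl(\tfrac{n}{2t} + f_t\bigr)^2$; substituting $t = \tau$ into $\tfrac{4u_t}{Nu} + \tfrac{2\tau u_{tt}}{Nu}$ and collecting powers of $\tau$ produces exactly $\tfrac{n(n-2)}{2N\tau} + \tfrac{2(n-2)f_t}{N} + \tfrac{2\tau(f_t^2 - f_{tt})}{N}$.

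There is no genuine conceptual obstacle — the lemma is a bookkeeping consequence of Perelman's rewriting, and the heart of the argument is the harmonicity of $r^{2-N}$ combined with the heat equation. The one step that requires some care is verifying that the three separate contributions to the coefficient of $u_\tau$ in $\Delta v$ telescope to $4/N$; in particular, one must use $m = n+N$ at the right moment, and keep in mind the implicit identification of $f_t$ with $f_\tau$ after setting $t = \tau$.
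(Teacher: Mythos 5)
Your proof is correct and takes essentially the same route as the paper: decompose $\Delta = \Delta_x + \Delta_y$, use the heat equation to handle $\Delta_x$, exploit the harmonicity of $r^{2-N}$ to get the $y$-Laplacian down to first and second $\tau$-derivatives of $u$, and then differentiate $\log u = -\tfrac{n}{2}\log t - f$ for the second equality. The only cosmetic difference is that you first rewrite $v = (2N)^{-n/2}\, r^{2-N}\, u(x,\tau)$ explicitly (the paper carries the $(2N)^{n/2}$ factor through the calculation instead), which is a clean way to organize the same bookkeeping.
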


\begin{proof}
We write $\Delta = \Delta_x + \Delta_y$. First, we note that
 $
	\frac{\Delta_x v }{v}  = \frac{\Delta u}{u} = \frac{u_t}{u}  
$,
where the second equality used the heat equation.  Observe that 
\begin{align}
	\nabla_y \left[ u (x,|y|^2/2N) \right] = u_t \, \frac{y}{N} \, , 
\end{align}
so that $\Delta_y \left[ u (x,|y|^2/2N) \right] = u_t + u_{tt} \frac{|y|^2}{N^2}$.
Therefore, since $|y|^{2-N}$ is harmonic (away from $y=0$), we have that
\begin{align}
	(2\, N)^{ \frac{n}{2}} \, \Delta_y v &= 2\, \langle \nabla_y |y|^{2-N} , u_t \, \frac{y}{N} \rangle + |y|^{2-N} \, 
	\left(  u_t + u_{tt} \frac{|y|^2}{N^2} \right) = \left[ (4/N - 1) \, u_t  + u_{tt} \frac{|y|^2}{N^2}
	\right] \, |y|^{2-N} \, . \notag
\end{align}
Putting this together gives
\begin{align}
	\frac{\Delta v}{v} = \frac{4}{N} \, \frac{u_t}{u} + \frac{2\, \tau \, u_{tt}}{N\, u} \, ,
\end{align}
where $u$ and its derivatives are evaluated at $(x, |y|^2/2N)$.  This gives the first equality. The second follows from the formula 
\eqr{e:defu1} for $u$ in terms of $f$.
\end{proof}

\subsection{Differential equation for the deficit}

We will use $\psi$ to denote $b^m \, \Delta v$; recall that $B$ is the trace-free hessian of $b^2$.     We will need the following:

\begin{Lem}   \label{l:dfct}
Set $d= 2\, m \, |\nabla b|^2$, then we have 
 \begin{align}
  \Delta\, d &+  2\, (2-m) \, \langle\nabla \log b ,\nabla d \rangle
  	=\frac{m}{b^2}\, \left| B \right|^2 + \frac{4\, m }{2-m}\, \, \langle \nabla \log b , \nabla \psi
 \rangle  \notag \\
 &\quad  +\frac{  2 \, d  \, \psi 
	}{(2-m)\, b^2 }
+\frac{   4\, \psi^2 
	}{(2-m)^2\, b^2 }
\, .
\end{align}
\end{Lem}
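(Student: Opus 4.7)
The identity is a Bochner-type formula for $d = 2m|\nabla b|^2$, generalizing the exact identity in \cite{C} (which is the $\psi = 0$ case, where $v = b^{2-m}$ is truly harmonic on $M\times\RR^N$). The $\psi$-terms on the right-hand side quantify the failure of $v$ to be harmonic. Since the paper reduces to $M = \RR^n$ and hence works on flat $\RR^{n+N}$, the plan is to start from the Euclidean Bochner identity
\[
\tfrac{1}{2}\Delta |\nabla b|^2 = |\Hess_b|^2 + \langle \nabla b, \nabla \Delta b\rangle,
\]
multiply by $4m$ to get $\Delta d$, and then convert everything into $\Hess_{b^2}$, $B$, $d$, and $\psi$ in three stages.

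First, use $\Hess_{b^2} = 2b\,\Hess_b + 2\,\nabla b \otimes \nabla b$ to write
\[
|\Hess_b|^2 = \tfrac{1}{4b^2}|\Hess_{b^2}|^2 - \tfrac{1}{b^2}\Hess_{b^2}(\nabla b, \nabla b) + \tfrac{|\nabla b|^4}{b^2},
\]
and split $|\Hess_{b^2}|^2 = |B|^2 + (\Delta b^2)^2/m$ via the trace-free decomposition at ambient dimension $m$. The cross term $\Hess_{b^2}(\nabla b, \nabla b)$ contains $\Hess_b(\nabla b,\nabla b) = \tfrac{1}{4m}\langle \nabla b, \nabla d\rangle$ as well as a $|\nabla b|^4$ piece; after dividing by $b^2$, the $\langle \nabla b, \nabla d\rangle$ part is precisely what supplies the drift term $\langle \nabla \log b, \nabla d\rangle$ on the left-hand side.

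Second, derive from $v = b^{2-m}$ and $\psi = b^m\Delta v$ the two algebraic identities
\[
\psi = (2-m)\,b\,\Delta b + (2-m)(1-m)\,|\nabla b|^2, \qquad \Delta b^2 = \tfrac{2\psi}{2-m} + d.
\]
Substituting the second into $(\Delta b^2)^2/m$ gives the $d\psi/b^2$ and $\psi^2/b^2$ terms on the right (plus a $d^2/b^2$ piece that must cancel against $|\nabla b|^4/b^2$ contributions). For the derivative term $\langle \nabla b, \nabla \Delta b\rangle$ in Bochner, I would differentiate the first identity, pair with $\nabla b$, and solve for $b\langle \nabla b, \nabla \Delta b\rangle$ as a linear combination of $\langle \nabla b, \nabla \psi\rangle$, $|\nabla b|^2\Delta b$, and $\langle \nabla b, \nabla d\rangle$. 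Dividing by $b$ produces the $\tfrac{4m}{2-m}\langle \nabla \log b, \nabla \psi\rangle$ term and adds to the drift, while the $|\nabla b|^2\Delta b$ piece is re-absorbed, via the same first identity, into the $d\psi/b^2$ term.

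The main obstacle is purely combinatorial bookkeeping: the factors $2m$, $(2-m)$, $m$, and $(1-m)$ recur throughout, and must conspire so that (a) all $|\nabla b|^4/b^2 = d^2/(4m^2 b^2)$ pieces cancel between the three sources (the Hessian split, the cross term, and the $(\Delta b^2)^2/m$ expansion), leaving no $d^2/b^2$ remainder, and (b) the $|\nabla b|^2\Delta b$ pieces from differentiating the $\psi$-identity reassemble with the stated coefficient on $d\psi/b^2$. A clean organization is to compute modulo ``$d^2/b^2$ terms'' first, verify the cancellation, and then read off the coefficients on $d\psi/b^2$, $\psi^2/b^2$, and the two drift terms.
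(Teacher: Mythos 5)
Your outline is sound: I checked that the $|\nabla b|^4$ contributions from your three sources (the $\Hess_b$-to-$\Hess_{b^2}$ conversion, the $(\Delta b^2)^2/m$ expansion, and the differentiated $\psi$-identity) carry coefficients $-4m$, $+4m^2$, and $+4m(1-m)$ respectively, which sum to zero, and the remaining coefficients on $d\psi/b^2$, $\psi^2/b^2$, and the two drift terms assemble correctly. The route is genuinely different from the paper's in where the Bochner formula is applied. You apply it to $|\nabla b|^2$, which produces $|\Hess_b|^2$ and then requires the conversion $\Hess_{b^2} = 2b\,\Hess_b + 2\,\nabla b \otimes \nabla b$ plus the additional identity $\psi = (2-m)\,b\,\Delta b + (2-m)(1-m)\,|\nabla b|^2$; you then differentiate this second identity, which involves quotient-rule bookkeeping because of the $b$ in $b\,\Delta b$. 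The paper instead applies Bochner to $|\nabla b^2|^2 = 4\,b^2\,|\nabla b|^2$, expanding the left-hand side as $2\,\Delta(b^2\,|\nabla b|^2)$ via Leibniz. This keeps all computations at the level of $b^2$: the quantity $|\Hess_{b^2}|^2$ already decomposes directly as $|B|^2 + (\Delta b^2)^2/m$ with no conversion, and the only auxiliary identity needed is $\Delta b^2 = d + \tfrac{2\psi}{2-m}$, whose gradient $\nabla \Delta b^2 = \nabla d + \tfrac{2}{2-m}\nabla\psi$ is immediate. In short, your approach works but spends extra effort on the $\Hess_b$-to-$\Hess_{b^2}$ translation; the paper's choice of applying Bochner one algebraic level up makes that translation unnecessary and shortens the cancellation accounting you flagged as the main obstacle.
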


\begin{proof}
For any strictly monotone function $f:\RR\to \RR$, we have that
\begin{align}   \label{e:Lap1}
\Delta\,f(u)=f''(u)\,|\nabla u|^2+f'(u)\,\Delta\,u=\frac{f''}{(f')^2}\,|\nabla f(u)|^2+f'(u)\,\Delta\,u\, .
\end{align}
Setting $f(s)=s^{\frac{2}{2-m}}$,  it follows that
\begin{align}
\Delta\,b^2&=2\,m\,|\nabla b|^2+\frac{2}{2-m}\,b^m\,\Delta\,v=2\,m\,|\nabla b|^2+\frac{2}{2-m}\,\psi\, .\label{e:Lapb^2}
\end{align}
The Bochner formula gives
\begin{align}
2\,b^2\,\Delta\,|\nabla b|^2&+4\,\langle\nabla b^2,\nabla |\nabla b|^2\rangle+2\,|\nabla b|^2\,\Delta\,b^2=
2\,\Delta\,(b^2\,|\nabla b|^2)=\frac{1}{2}\,\Delta\,|\nabla b^2|^2\notag\\
&=\left|\Hess_{b^2}-\frac{\Delta\,b^2}{m}\,\delta_{ij}\right|^2+\frac{|\Delta\,b^2|^2}{m}+\langle\nabla\,\Delta\,b^2,\nabla b^2\rangle\,  .
\end{align}
Next recall that
\begin{align}   \label{e:eDeltab2}
\Delta\,b^2&=2\,m\,|\nabla b|^2+\frac{2}{2-m}\,b^n\,\Delta\,v =2\,m\,|\nabla b|^2+\frac{2}{2-m}\,\psi\,  , \\
\frac{ (\Delta \, b^2)^2}{m} &= \frac{\Delta \, b^2}{m} \, \left( 2\,m\,|\nabla b|^2+\frac{2}{2-m}\,\psi \right) = 2\, |\nabla b|^2 \, \Delta \, b^2 + \frac{2\,  \psi \, \Delta \, b^2}{m\, (2-m)}  \, .
\end{align}
Inserting this,  cancelling terms and multiplying by $m$ gives
\begin{align}
2\,m \,b^2\,\Delta\,|\nabla b|^2&+4\, m\, \langle\nabla b^2,\nabla |\nabla b|^2\rangle\notag\\
&=m\, \left| B \right|^2+\frac{2\,\Delta\,b^2}{(2-m)}\,\psi
 +m \, \langle \nabla b^2 , \nabla ( 2\,m\,|\nabla b|^2+\frac{2}{2-m}\,\psi
 )\rangle \,  .
\end{align}
Setting $d=2\,m\,|\nabla b|^2$ and simplifying this gives
\begin{align}
  b^2\,\Delta\, d &+ (2-m) \, \langle\nabla b^2,\nabla d \rangle
  	=m\, \left| B \right|^2+\frac{2\,\Delta\,b^2}{(2-m)}\,\psi
 + \frac{2\, m }{2-m}\, \, \langle \nabla b^2 , \nabla \psi
 \rangle \,  .
\end{align}
Multiplying by $\frac{1}{b^2}$ and using the formula for $\Delta\, b^2$ again gives
 \begin{align}
  \Delta\, d &+  2\, (2-m) \, \langle\nabla \log b ,\nabla d \rangle
  	=\frac{m}{b^2}\, \left| B \right|^2+\frac{  2 \, d + 4 \psi/(2-m)
	}{(2-m)\, b^2 }\,\psi
 + \frac{4\, m }{2-m}\, \, \langle \nabla \log b , \nabla \psi
 \rangle \,  . \notag 
\end{align}
\end{proof}

We  use  Lemma \ref{l:dfct} to prove the key differential inequality for the elliptic deficit function:

\begin{Cor}   \label{c:deficit1}
We have
 \begin{align}
  \frac{ \Delta\, (d_0\, v) }{v}  &
  	=\frac{m}{b^2}\, \left| B \right|^2 + \frac{4\, m }{2-m}\, \, \langle \nabla \log b , \nabla \psi
 \rangle   +\frac{  (4-m) \, d_0  \, \psi 
	}{(2-m)\, b^2 } +\frac{  4 \, m  \, \psi 
	}{(2-m)\, b^2 }
+\frac{   4\, \psi^2 
	}{(2-m)^2\, b^2 }   \, . \notag 
\end{align}
\end{Cor}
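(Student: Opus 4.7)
The plan is to derive Corollary \ref{c:deficit1} by expanding $\Delta(d_0\,v)$ via the Leibniz rule and then substituting the identity of Lemma \ref{l:dfct}. First I would observe that since $d_0 = d - 2m$ differs from $d$ only by a constant, we have $\nabla d_0 = \nabla d$ and $\Delta d_0 = \Delta d$. The Leibniz rule gives
\begin{align}
\frac{\Delta(d_0\,v)}{v} = \Delta d + 2\,\frac{\langle \nabla d,\nabla v\rangle}{v} + d_0\,\frac{\Delta v}{v}\,.
\end{align}

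Next I would use the defining relation $v = b^{2-m}$ (equivalent to $b = v^{1/(2-m)}$) to rewrite the gradient and Laplacian of $v$ in terms of $b$. A direct computation gives $\nabla v / v = (2-m)\,\nabla \log b$, so $2\langle \nabla d,\nabla v\rangle/v = 2(2-m)\,\langle \nabla \log b,\nabla d\rangle$. For the Laplacian term, the definition $\psi = b^m\,\Delta v$ yields $\Delta v / v = \psi/b^2$. Substituting both of these, the expression becomes
\begin{align}
\frac{\Delta(d_0\,v)}{v} = \Bigl[\Delta d + 2(2-m)\,\langle \nabla \log b,\nabla d\rangle\Bigr] + \frac{d_0\,\psi}{b^2}\,.
\end{align}

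At this point I would insert Lemma \ref{l:dfct} for the bracketed quantity, producing
\begin{align}
\frac{\Delta(d_0\,v)}{v} = \frac{m}{b^2}|B|^2 + \frac{4m}{2-m}\,\langle \nabla \log b,\nabla \psi\rangle + \frac{2d\,\psi}{(2-m)\,b^2} + \frac{4\psi^2}{(2-m)^2\,b^2} + \frac{d_0\,\psi}{b^2}\,.
\end{align}
The last step is to split $d = d_0 + 2m$ in the term $2d\,\psi/((2-m)\,b^2)$, yielding a contribution $2d_0\,\psi/((2-m)\,b^2)$ together with $4m\,\psi/((2-m)\,b^2)$. The two $d_0\,\psi/b^2$ contributions combine with coefficient $1 + 2/(2-m) = (4-m)/(2-m)$, which gives exactly the stated $(4-m)\,d_0\,\psi/((2-m)\,b^2)$ term, while the $4m\,\psi/((2-m)\,b^2)$ term matches separately. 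There is no real obstacle here; this is a bookkeeping computation. The only point requiring care is keeping track of signs and of the algebraic identity $1 + 2/(2-m) = (4-m)/(2-m)$, which is what produces the slightly unexpected coefficient $(4-m)$ in the final formula.
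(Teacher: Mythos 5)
Your proof is correct and follows essentially the same route as the paper's: expand via the Leibniz rule, use $\nabla v/v = (2-m)\nabla\log b$ and $\Delta v/v = \psi/b^2$, substitute Lemma \ref{l:dfct}, and split $d = d_0 + 2m$ to collect the $d_0\psi/b^2$ terms with coefficient $1 + 2/(2-m) = (4-m)/(2-m)$. The only cosmetic difference is that you expand $\Delta(d_0 v)$ directly using $\nabla d_0 = \nabla d$, $\Delta d_0 = \Delta d$, whereas the paper first computes $\Delta(d\,v)/v$ and then subtracts $2m\,\psi/b^2$ via $\Delta(d_0 v) = \Delta(dv) - 2m\,\Delta v$; both are the same bookkeeping in a different order.
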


\begin{proof}
Since
\begin{align}
	\Delta \, (d \, v) &= v \, \Delta \, d + 2\, \langle \nabla d , \nabla u \rangle + d \, \Delta v  = 
	v \, \left(  \Delta \, d + 2\, \langle \nabla d , \nabla \log v \rangle + d \, b^{m-2} \, \Delta v \right) \notag \\
	&= v \, \left(  \Delta \, d + 2\, (2-m) \, \langle \nabla d , \nabla \log b \rangle + d \, \frac{\psi}{b^2} \right)
	\, ,
\end{align}
it follows from Lemma \ref{l:dfct} that
\begin{align}
 \frac{ \Delta\, (d\, v) }{v}  &
  	=\frac{m}{b^2}\, \left| B \right|^2 + \frac{4\, m }{2-m}\, \, \langle \nabla \log b , \nabla \psi
 \rangle   +\frac{  2 \, d  \, \psi 
	}{(2-m)\, b^2 }
+\frac{   4\, \psi^2 
	}{(2-m)^2\, b^2 } + d \, \frac{\psi}{b^2}
\, ,  
\end{align}
To see the claim, we use that
\begin{align}
		\Delta \, (d_0 \, v)  = 	\Delta \, (d \, v) - 2\, m \, \Delta \, v =  	\Delta \, (d \, v)  - 2\, m \, v \, \frac{\psi}{b^2} \, .
\end{align}
\end{proof}

We need to estimate $\psi=b^m\,\Delta\,v$, its derivatives, and the derivatives of $b$.   

\begin{Lem}	\label{l:bmdeltau}
If  $\frac{|y|^2}{2\, N} = \tau \in [\tau_1 , \tau_2]$, then 
\begin{align}
	|\psi | , \, |\nabla_x \psi| , |\nabla_y b| \leq  C   {\text{ and }}
	|\nabla_y \psi| , \, | \nabla_x b| \leq   C\, N^{ - \frac{1}{2}}  \, ,
\end{align}
where $C$ depends on $n, f$ and $\tau_2$.
\end{Lem}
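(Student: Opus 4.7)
The approach is to exploit the explicit product structure of $b$ and $\psi$. From $v=r^{2-m}\exp(-f(x,\tau))$ and $b=v^{1/(2-m)}$ one obtains directly
\begin{equation}
b(x,y)=r\,\exp\!\bigl(-f(x,\tau)/(2-m)\bigr),
\end{equation}
which cleanly separates the $y$-dependence (entering via $r$ and $\tau=r^2/(2N)$) from the $x$-dependence (entering only through $f$ and its $\tau$-derivatives). In addition, Lemma \ref{l:expandDeltav} gives $\Delta v/v=Q(x,\tau)/N$ with $Q=\frac{(n-2)n}{2\tau}+2(n-2)f_t+2\tau(f_t^2-f_{tt})$ uniformly bounded on $\tau\in[\tau_1,\tau_2]$ in terms of $n$, $f$, $\tau_1$, $\tau_2$.

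First I would compute $\psi$ in closed form. Since $b^m v=r^m\exp(mf/(m-2))\cdot r^{2-m}\exp(-f)=r^2\exp(2f/(m-2))$ and $r^2=2N\tau$,
\begin{equation}
\psi=b^m\,\Delta v=(b^m v)\cdot\frac{\Delta v}{v}=2\tau\,\exp\!\bigl(2f/(m-2)\bigr)\,Q(x,\tau),
\end{equation}
which is manifestly $O(1)$. The $x$-derivative of this expression brings down only bounded derivatives of $f$ (and hence of $Q$), so $|\nabla_x\psi|=O(1)$. Since $\psi$ depends on $y$ only through $\tau$, the chain rule gives $\nabla_y\psi=\partial_\tau\psi\cdot(y/N)$; combined with $\partial_\tau\psi=O(1)$ and $|y|/N=\sqrt{2\tau/N}\leq C\,N^{-1/2}$, this yields $|\nabla_y\psi|=O(N^{-1/2})$.

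Next I would differentiate $b$ directly from the product form. Only the $f$-factor depends on $x$, so $\nabla_x b=-b\,\nabla_x f/(2-m)$; with $b\sim r=\sqrt{2N\tau}$ and $m-2\sim N$, this yields $|\nabla_x b|=O(N^{-1/2})$. For $\nabla_y b$, using $\nabla_y r=y/r$ and $\nabla_y\tau=y/N$,
\begin{equation}
\nabla_y b=\frac{y}{r}\exp\!\bigl(-f/(2-m)\bigr)\,\Bigl[1-\frac{2\tau f_t}{2-m}\Bigr],
\end{equation}
and the bracket is $1+O(1/N)$, so $|\nabla_y b|$ is bounded (in fact tends to $1$ as $N\to\infty$).

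No step presents a genuine obstacle: everything reduces to the closed-form factorization of $b$ combined with the asymptotic expansion of $\Delta v/v$ already in hand. The one thing to track is the source of each improvement: a $y$-derivative of any function depending on $y$ only through $\tau$ produces the extra factor $y/N=O(N^{-1/2})$ (this is what gives the sharp bound on $|\nabla_y\psi|$), whereas an $x$-derivative of the $f$-factor in $b$ brings out the factor $1/(2-m)=O(1/N)$ which, multiplied by $b=O(\sqrt{N})$, gives the sharp bound on $|\nabla_x b|$.
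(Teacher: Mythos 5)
Your proof is correct and follows essentially the same route as the paper: both start from the closed form $b = r\,\e^{f/(m-2)}$ together with the expansion of $\Delta v/v$ from Lemma \ref{l:expandDeltav}, and both extract the $N^{-1/2}$ gain from the factor $\nabla_y\tau = y/N$ (for $y$-derivatives) and from $b/(m-2)\sim \sqrt{\tau/N}$ (for $\nabla_x b$). The only cosmetic difference is that you carry $\psi$ as a single closed-form expression $2\tau\,\e^{2f/(m-2)}\,Q(x,\tau)$ and appeal to the chain rule, whereas the paper splits $\psi$ into the three pieces $\psi^1,\psi^2,\psi^3$ and bounds each separately; the underlying computations are identical.
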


\begin{proof}
Within the proof, we will use $C$ for a constant that depends on $n, f$ and $\tau_2$ and is allowed to change even within an equation line.
Define $\psi^1 , \psi^2 , \psi^3$ by
\begin{align}
	\psi^1 = \frac{b^2}{r^2} , \, \psi^2 = \frac{b^2 \, f_t}{N} {\text{ and }} \psi^3 = \frac{b^2\, r^2}{N^2} \, (f_t^2 - f_{tt}) \, , 
\end{align}
so that
\begin{align}
	\psi = b^m \, \Delta \, v = n(n-2) \, \psi^1 + 2(n-2) \, \psi^2 + \psi^3 \, .
\end{align}
We will use freely that $f, f_t , f_{tt}$ and all of their $x$ and $t$ derivatives are bounded by such a $C$. In particular, since 
	$b = |y| \, \e^{ \frac{ f(x, |y|^2/2N)}{m-2}}$,
we see that $b \leq C \, |y|$, $|\nabla b| \leq C$ and 
\begin{align}
	\left| \nabla \frac{b}{|y|} \right| \leq \frac{C}{m-2} \, .
\end{align}
This gives the desired gradient bounds on $\psi^1$.  Moreover, 
it follows immediately that each $|\psi^i|  \leq C$ and 
\begin{align}
	|\nabla_x b | = \left| |y| \, \nabla_x \frac{b}{|y|} \right| \leq \frac{C}{\sqrt{N}} \, .
\end{align}

\vskip1mm
When we estimate the first derivatives of $\psi^2$ and $\psi^3$, we will use that $|\nabla b|$ is bounded and $|\nabla r| \leq 1$.  It is also useful that $r=|y|$ does not depend on $x$.
    We have that
\begin{align}
	|\nabla_x \, \psi^2 |&= \left| \frac{1}{N} \, f_t \, \nabla_x b^2 +  \frac{1}{N} \, b^2 \, \nabla_x f_t \right|  \leq C  \,  , \\
	|\nabla_x \, \psi^3| &= \left|  \frac{r^2}{N^2} \, \left( b^2  \, \nabla_x f_t +2\, b \, f_t \nabla_x b
	\right) \right| \leq C \, .
\end{align}

\vskip1mm
In the last  part,  define $\zeta (x,t) = (f_t^2 - f_{tt})$; we have that $\zeta$ and its $x$ and $t$ derivatives are bounded.  We will identify $\zeta$ with $\zeta (x, |y|^2/(2N))$ below, so that
$|\nabla_y \zeta |= |\zeta_t \, \frac{y}{N} | \leq C  \, N^{ - \frac{1}{2}}$. 
We have that
\begin{align}
	|\nabla_y \, \psi^2| &= \left|   \frac{1}{N} \, \left( 2\, b \, f_t \, \nabla_y b + b^2 \, f_{tt} \frac{y}{N} \right) \right| \leq C  \, N^{ - \frac{1}{2}} \, , \\
	\left| \nabla_y \, \psi^3 \right| &= \frac{1}{N^2} \, \left| 2 \, b \, r^2 \, \zeta  \, \nabla b + 2 \, r \, b^2 \, \nabla_y r \,\zeta  +
	b^2 \, r^2 \, \zeta_t \, \frac{y}{N} \right|  \leq C \, N^{ - \frac{1}{2}}  \, .
\end{align}
\end{proof}

\begin{proof}[Proof of Theorem \ref{p:ld}]
Corollary \ref{c:deficit1} gives that
\begin{align}
 \frac{ \Delta\, (d_0\, v) }{v}  - \frac{m}{b^2}\, \left| B \right|^2    &
  	=  \frac{1}{2-m} \, 
	\left( 4\, m  \, \langle \nabla \log b , \nabla \psi
 \rangle   +\frac{  (4-m) \, d_0  \, \psi 
	}{b^2 } +\frac{  4 \, m  \, \psi 
	}{b^2 }
+\frac{   4\, \psi^2 
	}{(2-m)\, b^2 } \right)  \, .  \ \notag
\end{align}
We will bound the terms on the right using that Lemma \ref{l:bmdeltau} gives that 
\begin{align}
	|\psi | , |\nabla_x \psi| , |\nabla_y b| , (\sqrt{N} \, |\nabla_y \psi| ) , (\sqrt{N} \, |\nabla_x b|) \leq c_2  \, , 
\end{align}
where $c_2$ depends on $\tau_2 , f$ and $n$.  We have that
\begin{align}
	\left|  \langle \nabla \log b , \nabla \psi
 \rangle  \right| \leq b^{-1} \, (|\nabla_x b| \, |\nabla_x \psi| + |\nabla_y b| \, |\nabla_y \psi|) \leq  \frac{2 \, c_2^2 }{b\, \sqrt{N}} \leq \frac{2 \, c_2^2 }{\sqrt{\tau_1} \, N} \, .
\end{align}
Next, we have
\begin{align}
	\left| \frac{d_0 \, \psi}{b^2} \right| &\leq \frac{c_2\, |d_0|}{b^2} \leq C\,  \frac{c_2\, |d_0|}{N \, \tau_1 } \, , \\
	\left| \frac{\psi}{b^2} \right| &\leq C\,  \frac{c_2}{N \, \tau_1 } \, , \\
	\left| \frac{ \psi^2}{(m-2)^2 \, b^2} \right| &\leq C\, \frac{c_2^2}{N^3 \, \tau_1} \, .
\end{align}
The proposition now follows.
\end{proof}




The next result relates the two deficit functions, proving \eqr{e:key2} in Theorem \ref{t:010}.

\begin{Pro}    \label{c:336}
Given $\tau=\frac{r^2_0}{2\,N}$  fixed, we have that
\begin{align}   
d_0 &= 4\,D_0 (x,\tau)+O(N^{-1})\,  .\label{e:Nover2} 
\end{align}
\end{Pro}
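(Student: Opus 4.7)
The plan is a direct Taylor expansion. Since $b = v^{1/(2-m)} = r \, \e^{f/(m-2)}$ with $m = N + n$, I would split $|\nabla b|^2 = |\nabla_x b|^2 + |\nabla_y b|^2$ and expand each as $N \to \infty$ with $\tau = r_0^2/(2N) \in [\tau_1, \tau_2]$ fixed, using throughout that $f$, $f_t$, $|\nabla_x f|$, and the relevant higher derivatives are bounded on this slab.

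For the $x$-derivatives, only $f$ depends on $x$, so $\nabla_x b = \frac{b \, \nabla_x f}{m-2}$ and
$$ 2m \, |\nabla_x b|^2 \;=\; \frac{2 m \, b^2 \, |\nabla_x f|^2}{(m-2)^2} \, . $$
The subtlety — and the main point of the whole computation — is that although $(m-2)^{-2}$ naively looks $O(N^{-2})$, the factor $b^2 = r^2 \, \e^{2f/(m-2)} = 2N\tau + O(1)$ grows linearly in $N$, and $\frac{4mN}{(m-2)^2} = 4 + O(N^{-1})$. Hence $2m \, |\nabla_x b|^2 = 4\tau \, |\nabla_x f|^2 + O(N^{-1})$, producing exactly the $|\nabla_x f|^2$ piece needed to match $4D_0$.

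For the $y$-derivatives, the chain rule (with $f$ depending on $y$ only through $\tau$) gives $\nabla_y b = \e^{f/(m-2)} \, y \, \bigl( \tfrac{1}{r} + \tfrac{r f_t}{(m-2)\, N} \bigr)$, so, using $r^2/N = 2\tau$,
$$ |\nabla_y b|^2 \;=\; \e^{2f/(m-2)} \, \Bigl( 1 + \tfrac{2\tau f_t}{m-2} \Bigr)^2 \, . $$
Taylor expanding both factors to first order in $(m-2)^{-1}$ gives $|\nabla_y b|^2 - 1 = \frac{2(f + 2\tau f_t)}{m-2} + O(N^{-2})$, and so $2m(|\nabla_y b|^2 - 1) = 4(f + 2\tau f_t) + O(N^{-1})$.

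Combining the two contributions, $d_0 = 4\tau \, |\nabla_x f|^2 + 4f + 8\tau f_t + O(N^{-1})$. On the other hand, expanding $4D_0 = 4\tau(|\nabla_x f|^2 + f_t) + 4(\tau f)_t$ at $t = \tau$ gives exactly the same expression $4\tau \, |\nabla_x f|^2 + 4f + 8\tau f_t$, so $d_0 - 4D_0 = O(N^{-1})$. There is no real obstacle; the only point demanding care is the $|\nabla_x f|^2$ contribution noted above, which would be wrongly discarded if one mechanically treated every $(m-2)^{-k}$ factor as $O(N^{-k})$ without tracking the $N$-growth of $b^2$.
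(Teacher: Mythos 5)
Your proposal is correct and follows essentially the same route as the paper: a direct first-order Taylor expansion of $\nabla b$ in powers of $(m-2)^{-1}\sim N^{-1}$, using $b = r\,\e^{f/(m-2)}$ and $\tau = r^2/(2N)$, and matching against $4D_0 = 4\tau\,|\nabla_x f|^2 + 8\tau\, f_t + 4f$. The only organizational difference is that you expand $|\nabla_x b|^2$ and $|\nabla_y b|^2$ separately, while the paper expands the vector $\nabla b$ and squares it (the cross terms vanish since $y/|y|\perp\nabla_x f$); your explicit remark that $|\nabla_x b|^2$ survives to $O(1)$ because $b^2\sim 2N\tau$ grows linearly in $N$ is the same point the paper uses implicitly.
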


\begin{proof}
Note that
\begin{align}
\nabla b&=\frac{y}{|y|}\,\left(1+\frac{f(x,\frac{r^2}{2\,N})}{N}\right)+r\,\frac{\nabla_x\,f}{N}+\frac{r\,f_t\,y}{N^2}+O(N^{-2})\notag\\
&=\frac{y}{|y|}\,\left(1+\frac{f(x,\frac{r^2}{2\,N})}{N}+\frac{r^2}{N^2}\,f_t\right)+r\,\frac{\nabla_x\,f}{N}+O(N^{-2})\,  .
\end{align}
Therefore, we see that
\begin{align}
 \frac{d_0}{2} &= N\,\left(|\nabla b|^2-1\right)=2\,f+\frac{r^2}{N}\,|\nabla_x\,f|^2+\frac{2\,r^2}{N}\,f_t+O(N^{-1})\,  .
\end{align}
\end{proof}

\section{The deficit functions satisfy the same equation asymptotically}


The next theorem shows that the parabolic equation for $D_0$ is the high-dimensional limit of the elliptic equations for $d_0$, proving \eqr{e:key3} and completing the proof of Theorem \ref{t:010}.

 \begin{Thm}	\label{t:old315}
Given $0 < \tau_1 < \tau_2$, then on the set with $\frac{|y|^2}{2\, N} = \tau \in [\tau_1 , \tau_2]$, we have 
\begin{align}
	\left| \frac{\Delta\,(d_0\,v)}{v} +  4\, \frac{\Box\,(D_0\,u)}{u} \right| \leq  \frac{C_2}{N\, \sqrt{\tau_1}} + \frac{C_2}{N\, \tau_1} + \frac{C_2}{N}\,  ,
\end{align}
where $d_0$ and $v$ are evaluated at $(x,y)$ and $D_0, u$  at $(x , \tau)$.
\end{Thm}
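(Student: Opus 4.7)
The plan is to combine Theorem \ref{p:ld} with the explicit formula \eqr{e:boxD0}. Theorem \ref{p:ld} already shows that $\frac{\Delta(d_0 v)}{v}$ agrees with $\frac{m}{b^2}|B|^2$ up to an error of exactly the type appearing in the statement, while \eqr{e:boxD0} gives
\begin{align}
4\,\frac{\Box(D_0\,u)}{u}=-8\,\tau\,\left|\Hess_x f-\frac{\delta_{ij}}{2\tau}\right|^2.\notag
\end{align}
Thus it suffices to prove the pointwise identity
\begin{align}
\frac{m}{b^2}\,|B|^2=8\,\tau\,\left|\Hess_x f-\frac{\delta_{ij}}{2\tau}\right|^2+O(N^{-1})\label{e:goal}
\end{align}
with the proper dependence on $\tau_1$. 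Once \eqr{e:goal} is established, Theorem \ref{t:old315} follows by adding the two equations and absorbing the errors into the constants.

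\textbf{Block decomposition of $B$.} I would write $b^2 = r^2\,g(x,\tau)$ with $g=\exp\bigl(\tfrac{2f}{m-2}\bigr)$, so that $g=1+O(N^{-1})$ and $r^2/(m-2)=2\tau\,(1+O(N^{-1}))$ on the region $\tau\in[\tau_1,\tau_2]$. Computing the Hessian of $b^2$ on $M\times\RR^N$ block by block (using $\partial_{y_i}\tau=y_i/N$ and $\partial_{y_iy_j}\tau=\delta_{ij}/N$):
\begin{align}
\Hess_{b^2}\big|_{xx}&=\frac{2\,r^2}{m-2}\,\Hess_x f+O(N^{-1})=4\tau\,\Hess_x f+O(N^{-1}),\notag\\
\Hess_{b^2}\big|_{xy}&=\frac{4\,y_j\,f_{x_i}}{m-2}+O(N^{-1}),\notag\\
\Hess_{b^2}\big|_{yy}&=2\,\delta_{ij}\,g\,\Bigl(1+\tfrac{2\tau f_t}{m-2}\Bigr)+\frac{y_iy_j}{N}\cdot O(N^{-1}).\notag
\end{align}
Taking the trace one finds $\Delta b^2/m=\frac{2N}{m}g\bigl(1+\tfrac{2\tau f_t}{m-2}\bigr)+O(N^{-1})=2+O(N^{-1})$. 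Hence the $xx$-block of $B=\Hess_{b^2}-\frac{\Delta b^2}{m}\delta_{ij}$ is $4\tau\bigl(\Hess_x f-\frac{\delta_{ij}}{2\tau}\bigr)+O(N^{-1})$, giving the dominant contribution
\begin{align}
|B^{xx}|^2=16\,\tau^2\,\left|\Hess_x f-\frac{\delta_{ij}}{2\tau}\right|^2+O(N^{-1}).\notag
\end{align}

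\textbf{Smallness of the remaining blocks.} The cross block contributes $|B^{xy}|^2=\sum_{i,j}\bigl(\tfrac{4y_j f_{x_i}}{m-2}\bigr)^2+O(N^{-1})=\tfrac{16\,r^2\,|\nabla_x f|^2}{(m-2)^2}+O(N^{-1})=O(N^{-1})$, since $r^2=2N\tau$ and $(m-2)^2\sim N^2$. For the $yy$-block, the diagonal part of $B^{yy}$ is $2\delta_{ij}\bigl[g(1+\tfrac{2\tau f_t}{m-2})-1\bigr]+O(N^{-1})\,\delta_{ij}=O(N^{-1})\,\delta_{ij}$, so its squared contribution is $N\cdot O(N^{-2})=O(N^{-1})$; the off-diagonal $y_iy_j/N$ part has coefficient $O(N^{-2})$, and $\sum_{i,j}(y_iy_j)^2=r^4=O(N^2)$, producing $O(N^{-2})$. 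Combined with $\tfrac{m}{b^2}=\tfrac{1}{2\tau}(1+O(N^{-1}))$, this yields \eqr{e:goal} (with the $\tau_1$ dependence tracked through the denominators $b^{-2},\tau^{-1}$ and the square-root terms from gradients, exactly paralleling Lemma \ref{l:bmdeltau} and Theorem \ref{p:ld}).

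\textbf{Main obstacle.} The bookkeeping in the $yy$-block is the delicate step: it requires simultaneously exploiting (i) the near-cancellation of the diagonal part with $\Delta b^2/m$, which crucially uses $\tfrac{N}{m}=1-\tfrac{n}{m}$, and (ii) the Cauchy--Schwarz-type bound $\sum(y_iy_j)^2=r^4\le (2N\tau_2)^2$ to absorb the off-diagonal $y_iy_j$ terms. This is where the factors $1/\sqrt{\tau_1}$ and $1/\tau_1$ in the error must be isolated, matching the form already produced by Theorem \ref{p:ld}, so that the two errors combine cleanly in the final bound.
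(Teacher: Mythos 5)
Your proposal is correct and follows essentially the same route as the paper: combine Theorem \ref{p:ld} with an asymptotic expansion of $\frac{m}{b^2}|B|^2$ computed via a block decomposition of $\Hess_{b^2}$ (your blocks correspond exactly to the matrices $Q_1, Q_2$ in the paper's Lemma \ref{l:matrixasy}), and the formula $\frac{\Box(D_0 u)}{u}=-2\tau\,|F|^2$. The one small difference is that you cite \eqr{e:boxD0} as given, whereas the paper's proof re-derives it inline from the Bochner formula and $\Box f=-\tfrac{n}{2t}-|\nabla f|^2$; since \eqr{e:boxD0} is stated in the introduction this is a legitimate shortcut, but a fully self-contained proof would include that short computation.
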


We will break the proof up into several steps.  The next lemma computes the asymptotics of the trace-free hessian $B$ of $b^2$.

\begin{Lem}	\label{l:matrixasy}
If we set $\tau = \frac{|y|^2}{2N}$, then the hessian of $b^2$ satisfies
\begin{align}
	\Hess_{b^2} = \frac{2\, b^2}{N}  \,  \left( \Hess_f + \frac{\delta_{\alpha \beta}}{2 \, \tau} + O(N^{ - \frac{1}{2}}) \, Q_1 + O(N^{-1}) \, Q_2 \right) \, ,
\end{align}
where $\delta_{\alpha \beta}$ is the $N \times N$ identity, $Q_1$ is off-diagonal and $Q_2$ is in the two diagonal blocks.  It follows from this that the trace-free hessian $B$ satisfies
\begin{align}
	|B|^2 &= \frac{4\, b^4}{N^2} \, \left( \left| \Hess_f -  \frac{1}{2\, \tau}  \, \delta_{ij} \right|^2 + O(N^{-1}) \right) \, .
\end{align}
\end{Lem}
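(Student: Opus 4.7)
The plan is to expand $b^2 = r^2 \, \e^{\phi}$ with $\phi = \frac{2\, f(x, \tau)}{m-2}$ in powers of $1/N$, differentiate twice, and track the size of each block entry on $M \times \RR^N$. The basic tension is that $|y|$ can be as large as $\sqrt{N}$ while $\phi$ and $\partial_x \phi$ are $O(1/N)$ and $\partial_{y_\alpha} \phi = \frac{2\, f_t\, y_\alpha}{N(m-2)}$ is $O(|y_\alpha|/N^2)$, so powers of $|y|$ must be paired carefully with powers of $1/N$ in each block.

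First, I would differentiate $b^2 = r^2 \, \e^\phi$ and split $\partial_i \partial_j b^2$ into the $xx$, $yy$, and $xy$ blocks. In the $xx$-block, $r^2$ contributes nothing, and the dominant term is $r^2 \, \e^\phi \, \partial_{x_i} \partial_{x_j} \phi = \frac{2 b^2}{m-2}(\Hess_f)_{ij}$, which equals $\frac{2 b^2}{N}(\Hess_f)_{ij}$ up to the factor $\frac{N}{m-2} = 1 + O(1/N)$; the leftover $\partial_x \phi \cdot \partial_x \phi$ contribution is $O(b^2/N^2)$ per entry. In the $yy$-block, the leading contribution $\partial_{y_\alpha}\partial_{y_\beta} r^2 \cdot \e^\phi = 2\, \e^\phi\, \delta_{\alpha \beta}$ equals $\frac{2b^2}{N} \cdot \frac{\delta_{\alpha \beta}}{2\tau}$ since $\frac{2b^2}{N} = 4\tau \, \e^\phi$, while the subleading corrections have the forms $\frac{C\, y_\alpha y_\beta}{N(m-2)}$ and $\frac{C\, \delta_{\alpha\beta}}{m-2}$. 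In the $xy$-block, the dominant term is $\e^\phi(\partial_{x_i} \phi)(2 \, y_\alpha) = \frac{4\, \e^\phi\, y_\alpha}{m-2}(\nabla_x f)_i$, whose entries are $O(|y_\alpha|/N)$; after dividing by $\frac{2b^2}{N}$ each entry is $O(|y_\alpha|/\sqrt{N}) = O(1/\sqrt{N})$. Assembling the three blocks yields the stated decomposition of $\Hess_{b^2}$.

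For the second identity, set $M = \frac{N}{2b^2}\, \Hess_{b^2}$ so that $\frac{N\, B}{2b^2} = M - \frac{\Tr M}{m}\, I$, where $I$ is the identity on $M \times \RR^N$. Summing diagonals and using $\sum_\alpha y_\alpha^2 = 2N\tau$ gives $\Tr M = \Delta_x f + \frac{N}{2\tau} + O(1)$, hence $\frac{\Tr M}{m} = \frac{1}{2\tau} + O(1/N)$. Subtracting block by block: the $xx$-block becomes $\Hess_f - \frac{\delta_{ij}}{2\tau} + O(1/N)$ per entry, the leading $\frac{\delta_{\alpha\beta}}{2\tau}$ in the $yy$-block cancels leaving only the errors identified above, and the $xy$-block is unchanged.

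The main obstacle is the Frobenius bookkeeping in the blocks with $y$-indices, where a naive per-entry bound is too weak. The $N \times N$ $yy$-block off-diagonal errors are $O(|y_\alpha y_\beta|/[N(m-2)])$, each of size $O(1/N)$; a naive count over $N^2$ entries would give a Frobenius-squared contribution of $O(1)$. The decisive observation is the rank-one structure: $\sum_{\alpha, \beta} (y_\alpha y_\beta)^2 = |y|^4 = 4 N^2 \tau^2$, so $\sum \left(y_\alpha y_\beta/[N(m-2)]\right)^2 = O(1/N^2)$. Similarly, the $xy$-block contributes $\sum_{i,\alpha}(y_\alpha/(m-2))^2 \cdot O(1) = O(|y|^2/(m-2)^2) = O(1/N)$. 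Combining with the $xx$-contribution $|\Hess_f - \delta_{ij}/(2\tau)|^2 + O(1/N)$ gives $\left|\frac{N\, B}{2b^2}\right|^2 = |\Hess_f - \delta_{ij}/(2\tau)|^2 + O(1/N)$, and multiplying through by $(2b^2/N)^2 = 4b^4/N^2$ yields the second claim.
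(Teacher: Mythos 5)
Your proposal is correct and takes essentially the same route as the paper: both expand the Hessian of $b^2 = r^2 \e^{2f/(m-2)}$ block by block in $(x,y)$ and then bound the Frobenius norms of the error blocks using the effective rank-one structure in the $y$-indices (via $\sum_\alpha y_\alpha^2 = |y|^2 = 2N\tau$), which you usefully make explicit. One minor slip: in the $xy$-block the normalized entry should read $O(|y_\alpha|/N)$ rather than $O(|y_\alpha|/\sqrt{N})$, though your conclusion $O(1/\sqrt{N})$ per entry and $O(1/N)$ for the Frobenius-squared contribution is still correct.
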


\begin{proof}
Let $x_i$ and $y_{\alpha}$ be coordinates on $\RR^n$ and $\RR^N$, respectively.  Since $r=|y|$ does not depend on $x_i$, we have that
\begin{align}
	(b^2)_i &= b^2 \, \frac{2\, f_i}{m-2} \, ,  \\
	(b^2)_{ij} &= b^2 \, \left(  \frac{2\, f_{ij}}{m-2} +  \frac{4\, f_{i}\, f_j}{(m-2)^2} \right)\, .
\end{align}
Next, using that $(r^2)_{\alpha} = 2 \, y_{\alpha}$, we see that
\begin{align}
	(b^2)_{\alpha} &= 2\, y_{\alpha} \, \e^{ \frac{2 \, f}{m-2} } + b^2 \, \frac{2\, f_t}{m-2}  \, \frac{y_{\alpha}}{N}  =  y_{\alpha}\, \left( \frac{2}{r^2}  + \frac{2\, f_t}{N(m-2)} \right)  \, b^2  \, , \notag \\
		(b^2)_{i\, \alpha} &=  2\, y_{\alpha} \, \e^{ \frac{2 \, f}{m-2} } \, \frac{2\, f_i}{m-2} + b^2 \, \left( \frac{2\, f_i}{m-2} \right) \, \frac{2\, f_t}{N(m-2)}  \, y_{\alpha}
		+ y_{\alpha}\, \left(   \frac{2\, f_{ti}}{N(m-2)} \right)  \, b^2 \, , \notag  \\
		(b^2)_{\alpha \beta} &= 2\, \delta_{\alpha \beta} \, \e^{ \frac{2 \, f}{m-2} } + 4\, \frac{y_{\alpha} \, y_{\beta}\, f_t}{N(m-2)} \,   \e^{ \frac{2 \, f}{m-2} }+
		 b^2 \, \frac{2\, f_t\, \delta_{\alpha \beta}}{N(m-2)}   +
		  b^2 \, \left( \frac{4\, f_t^2}{(m-2)^2}  +
		    \frac{2\, f_{tt}}{m-2} \right) \, \frac{y_{\alpha}\, y_{\beta}}{N^2} \, . \notag
\end{align}
Noting that $r^2$ and $b^2$ will be on the order of $N$, we see that
\begin{align}
		b^{-2} \, (b^2)_{ij} &=  \frac{2\, f_{ij}}{m-2} +  O(N^{-2}) \, ,  \\
		b^{-2} \, (b^2)_{i\, \alpha} &= O(N^{ - \frac{3}{2}})    \, , \\
		b^{-2} \, (b^2)_{\alpha \beta} &=  \frac{2\, \delta_{\alpha \beta}}{ r^2} +   O(N^{-2})  \, .
\end{align}
Thus, if we set $\tau = \frac{r^2}{2N}$, then we get the first claim.

Taking the trace  gives
 $\Delta \, b^2 = \frac{2\, b^2}{N}  \,  \left(\frac{N}{2 \, \tau}   + O(1)  \right)$,
so the trace-free part $B$ is  
\begin{align}
	\Hess_{b^2} - \frac{\Delta \, b^2}{m} \, g_{ij} &= \frac{2\, b^2}{N}  \,  \left( \Hess_f - \left( \frac{1}{2\, \tau} + O(N^{-1}) \right) \, (\delta_{ij} + \delta_{\alpha \beta} )+ \frac{\delta_{\alpha \beta}}{2 \, \tau} + O(N^{ - \frac{1}{2}}) \, Q_1 + O(N^{-1}) \, Q_2 \right) \notag \\
	&=  \frac{2\, b^2}{N}  \,  \left( \Hess_f -  \frac{1}{2\, \tau}  \, \delta_{ij}  + O(N^{ - \frac{1}{2}}) \, Q_1 + O(N^{-1}) \, Q_2 \right) \, .
\end{align}
It follows that the norm squared is
\begin{align}
	|B|^2 &= \frac{4\, b^4}{N^2} \, \left( \left| \Hess_f -  \frac{1}{2\, \tau}  \, \delta_{ij} \right|^2 + O(N^{-1}) \right) \, .
\end{align}
\end{proof}

\begin{proof}[Proof of Theorem \ref{t:old315}]
Proposition \ref{p:ld} gives that 
if  $\frac{|y|^2}{2\, N} = \tau \in [\tau_1 , \tau_2]$, then 
\begin{align}	\label{e:fromABOVE}
	\left| \frac{\Delta\,(d_0\,v)}{v} - \frac{m}{b^2} \, |B|^2 \right| \leq \frac{C_2}{N\, \sqrt{\tau_1}} + \frac{C_2}{N\, \tau_1} + \frac{C_2\, |d_0|}{N\, \tau_1}  \,  ,
\end{align}
where $B = \Hess_{b^2}-\frac{\Delta\,b^2}{m}\,\delta_{ij}$
and 
 $C_2$ depends on $\tau_2 , f$ and $n$.
 The second ingredient is Lemma \ref{l:matrixasy}, which gives that
\begin{align}	\label{e:matrixA}
	|B|^2 &= \frac{4\, b^4}{N^2} \, \left( \left| F \right|^2 + O(N^{-1}) \right) = 16 \, \tau^2 \, \left( |F|^2 + O(N^{-1}) \right) \, ,
\end{align}
where $B$ and $b$ are evaluated at $(x,y)$ while $F = \Hess_f-\frac{1}{2\,t}\,g_{ij}$ is evaluated at $(x, \tau)$ with $\tau = \frac{|y|^2}{2N}$.  Since $\frac{b}{r} - 1$ and $\frac{m}{N} - 1$ are both $O(N^{-1})$, it follows that
\begin{align}
	 \left| 8 \, \tau \, |F|^2   - \frac{m}{b^2} \, |B|^2 \right| &=    \left| \frac{4 \, r^2}{N} \, |F|^2   - \frac{m}{b^2} \,  |B|^2 \right| \leq  \left| \frac{4 \, b^2}{N} \, |F|^2   - \frac{m}{b^2} \,  |B|^2 \right| + O(N^{ - \frac{1}{2}}) \notag \\
	 &\leq \frac{m}{b^2} \, \left| \frac{4 \, b^4}{N^2} \, |F|^2   -   |B|^2 \right| + O(N^{ - \frac{1}{2}})  
	 \leq  \tau_1^{-1} \, O(N^{-1})  + O(N^{ - \frac{1}{2}}) 
	 \, ,
\end{align}
where we used that $|F|$ is bounded (since $\tau \leq \tau_2$) and the last inequality used \eqr{e:matrixA}.
Combining this with \eqr{e:fromABOVE}, we see that
\begin{align}	\label{e:2f20}
	\left| \frac{\Delta\,(d_0\,v)}{v} -8 \, \tau \, |F|^2 \right| &\leq   \frac{C_2}{N\, \sqrt{\tau_1}} + \frac{C_2}{N\, \tau_1} + \frac{C_2}{N} \, .
\end{align}
Using the Leibniz rule, we see that
\begin{align}	\label{e:old221}
\Box\,& D_0 = \Box\,[t\,(|\nabla f|^2+f_t)+(t\,f)_t] = \Box \, f + 2 \, t \, \Box \, f_t + t \, \Box \, |\nabla f|^2 + |\nabla f|^2 + 2 \, f_t  \,  .
\end{align}
Since $\Box \, f = - \frac{n}{2\, t} - |\nabla f|^2$, we get  that $\Box\,f_t=\frac{n}{2\,t^2}-2\,\langle\nabla f_t,\nabla f\rangle$ and
\begin{align}
\Box\,|\nabla f|^2&=-2\,\left|  \Hess_f  \right|^2 -2\,\langle \nabla f,\nabla |\nabla f|^2\rangle \label{e:eF1}
\, .
\end{align}
Using these in \eqr{e:old221} gives that
\begin{align}	 
\Box\, D_0 = - \frac{n}{2\, t} - |\nabla f|^2 + 2 \, t \,  \left( \frac{n}{2\,t^2}-2\,\langle\nabla f_t,\nabla f\rangle \right)
-2\, t \,  \left( \left|  \Hess_f  \right|^2   +\langle \nabla f,\nabla |\nabla f|^2\rangle
\right) + |\nabla f|^2 + 2 \, f_t \notag \\
=  \frac{n}{2\, t}  -4 \, t \,\langle\nabla f_t,\nabla f\rangle 
-2\, t \,   \left|  \Hess_f  \right|^2   - 2\, t\, \langle \nabla f,\nabla |\nabla f|^2\rangle  + 2 \, f_t 
  \,  .
\end{align}
Using that $|F|^2 = |\Hess_f - \delta_{ij}/2t|^2 = |\Hess_f|^2 - \Delta f/t + n/4t^2$, we see that
\begin{align}	\label{e:old221a}
\Box\, D_0 
&=     -2\,\langle\nabla D_0,\nabla f\rangle -2\,t\,\left| F \right|^2
  \,  .
\end{align}
The product gives that
 $
	\frac{\Box\,(D_0 \,u)}{u}  =-2\,t\,\left| F \right|^2$ and comparing with
	  \eqr{e:2f20} completes the proof.
\end{proof}

\section{The log Sobolev inequality as a high dimensional elliptic limit}

The log Sobolev inequality is a classical result   closely linked to the heat equation.
 We will explain  how it
can be obtained from a purely elliptic  approach, proving Theorem \ref{p:repnf}.   This elliptic approach uses the previous sections
and a sharp asymptotic lower bound for the elliptic deficit function.  This in turn follows from an asymptotic formula \eqr{e:elliptLS} for
     the solid averages of the almost-harmonic function $v(x,y)$ on $\RR^m = \RR^n \times \RR^N$ defined in 
    \eqr{e:othervxy}.   These averages are on balls centered away from the origin and computing them   involves a concentration of measure phenomenon,  with a twist since the balls are off-center.

\vskip1mm
 We first show that Theorem \ref{p:repnf} follows if  the averages of $v$ 
  over  balls centered at  $\bar{z}= (0,\bar{y}) \in \RR^m$  with $|\bar{y}|^2 = 2\, N \, \tau$
 converge  to a gaussian integral of $u(x,0)$ on $\RR^n$:

\begin{Lem}	\label{l:setitup}
 Theorem \ref{p:repnf} will follow if we show that
\begin{align}	\label{e:IFWT}
	\lim_{\beta \to 1} \left\{\lim_{N \to \infty} \,    \frac{\beta^{-m} }{2\, \tau \, \omega_{m-1}}\, \int_{B_{\beta \, R}(\bar{z})} v(x,y) \, dy \, dx \right\}
	=  \left( 4 \, \pi \right)^{ - \frac{n}{2}}  \, \int_{\RR^n} u(\bar{x},0) \, \e^{ - \frac{|x|^2}{4 \, \tau}} \, d\bar{x} \, .
\end{align}
\end{Lem}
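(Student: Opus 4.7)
The plan is to derive Theorem \ref{p:repnf} from \eqref{e:IFWT}. The theorem asserts \eqref{e:elliptLS} together with the deduction of the log Sobolev inequality; the latter is handled separately by Proposition \ref{p:equiv}, so the lemma reduces to showing that \eqref{e:IFWT} implies \eqref{e:elliptLS}.

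Step 1: match the normalization. The volume of $B_{\beta R}(\bar z)\subset\RR^m$ is $(\beta R)^m\omega_{m-1}/m$, and $R^2=2N\tau$, so a direct algebraic computation yields
\begin{equation*}
\frac{\beta^{-m}}{2\tau\,\omega_{m-1}}\int_{B_{\beta R}(\bar z)} v(x,y)\,dy\,dx \;=\; \frac{N}{m}\,R^{m-2}\,\overline{v}_{B_{\beta R}(\bar z)}\,.
\end{equation*}
Since $m=N+n$, the factor $N/m = 1 + O(1/N)\to 1$, so the LHS of \eqref{e:IFWT} equals $\lim_{\beta\to 1}\lim_{N\to\infty} R^{m-2}\,\overline{v}_{B_{\beta R}(\bar z)}$. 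On the other hand, by the very definition \eqref{e:othervxy} of $v$, $R^{m-2}v(\bar z) = \e^{-f(0,\tau)}$ is independent of $N$. Therefore \eqref{e:elliptLS} will follow from \eqref{e:IFWT} once we establish the almost-mean-value statement
\begin{equation*}
\lim_{\beta\to 1}\,\lim_{N\to\infty}\, R^{m-2}\,\overline{v}_{B_{\beta R}(\bar z)} \;=\; \e^{-f(0,\tau)}\,.
\end{equation*}

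Step 2: the almost-mean-value property. For $\beta<1$, the ball $B_{\beta R}(\bar z)$ avoids the pole of $v$ at $y=0$, and $r=|y|$ stays in $[(1-\beta)R,(1+\beta)R]$, so $\tau=r^2/(2N)$ lies in a compact interval. On this region Lemma \ref{l:expandDeltav} gives $|\Delta v|/v=O(N^{-1})$. Writing the spherical average $\phi(s)=\overline{v}_{\partial B_s(\bar z)}$ and using $\phi'(s)=s^{1-m}\omega_{m-1}^{-1}\int_{B_s(\bar z)}\Delta v$, one obtains the representation
\begin{equation*}
\overline{v}_{B_\rho(\bar z)} - v(\bar z) \;=\; \frac{m}{\rho^m}\int_0^\rho\!\int_0^s \frac{\int_{B_t(\bar z)}\Delta v\,dw}{t^{m-1}\omega_{m-1}}\,dt\;s^{m-1}\,ds\,,
\end{equation*}
which controls the deficit by an integral of $\Delta v$ against a nonnegative kernel. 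The key is that this kernel concentrates at $\bar z$ in high dimension, so only the local behavior of $\Delta v/v$ near $\bar z$---which by Lemma \ref{l:expandDeltav} is explicitly $\tfrac{(n-2)n}{2N\tau}+O(N^{-1})$---controls the error. After multiplication by $R^{m-2}$ and sending first $N\to\infty$ and then $\beta\to 1$, the error vanishes, completing Step 2. Combining with \eqref{e:IFWT} and Step 1 gives \eqref{e:elliptLS}.

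Step 3: the log Sobolev inequality follows from \eqref{e:elliptLS} through Proposition \ref{p:equiv}, which reads \eqref{e:elliptLS} as the heat-kernel representation of $u(0,\tau)$ in terms of $u(\cdot,0)$ and converts it, via \eqref{e:logsobD0} and a Jensen-type comparison, into the sharp logarithmic Sobolev inequality on $\RR^n$ for the measure $u(\cdot,\tau)$.

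The main obstacle is the error estimate in Step 2. The naive bound $|\overline{v}_{B_{\beta R}(\bar z)}-v(\bar z)| \leq \tfrac{(\beta R)^2}{2(m+2)}\sup_{B_{\beta R}(\bar z)}|\Delta v|$ is exponentially too weak, because $v\sim r^{2-m}$ contributes a factor $(1-\beta)^{2-m}$ coming from the face of the ball facing the origin. The resolution is genuinely a concentration-of-measure input: the Green's-type kernel in the mean-value identity is sharply peaked at $\bar z$ in high dimension, so one effectively replaces the global sup of $\Delta v$ by its local value near $\bar z$, where Lemma \ref{l:expandDeltav} gives a clean $O(1/N)$ bound. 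Quantifying this concentration is the technical heart of the proof.
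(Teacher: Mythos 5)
Your Step 1 reduction is correct: up to the harmless factor $N/m = 1 + O(N^{-1})$, the left side of \eqref{e:IFWT} equals $R^{m-2}$ times the solid average of $v$ over $B_{\beta R}(\bar z)$, and $R^{m-2}v(\bar z) = \e^{-f(0,\tau)}$ is $N$-independent, so \eqref{e:elliptLS} indeed reduces to an almost-mean-value statement. Step 3 correctly delegates to Proposition \ref{p:equiv}.

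The gap is in Step 2, which you acknowledge is the technical heart but do not carry out, and the mechanism you sketch is not the right one. The Green's-function weight in your mean-value representation does \emph{not} concentrate near $\bar z$: rewriting your double integral as $\int_0^\rho \left( \int_{B_t(\bar z)} \Delta v \right)\frac{1-(t/\rho)^m}{t^{m-1}\omega_{m-1}}\,dt$, the integrand (once the $t^m$ growth of the volume is accounted for) carries the weight $t\bigl(1-(t/\rho)^m\bigr)$, which is spread out and is largest for $t$ \emph{near} $\rho$, not near $0$. What is actually needed is control of $\int_{B_t(\bar z)} |\Delta v|$ uniformly in $t\le\rho$, and the obstruction you correctly identify — that $v\sim |y|^{2-N}$ blows up like $(1-\beta)^{2-N}$ on the face of the ball facing the origin — is not resolved by localization near $\bar z$. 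The paper's route is to use Lemma \ref{l:expandDeltav} to obtain the pointwise bound $|\Delta v| \le \frac{c}{m}\,(2N)^{-n/2}\,|y|^{2-N}$ on $B_\rho(\bar z)$ and then apply the \emph{exact} solid mean value property to the harmonic majorant $|y|^{2-N}$ (harmonic on $\RR^m$ away from $\{y=0\}$, which $B_\rho(\bar z)$ avoids since $\beta<1$), giving $\int_{B_s(\bar z)} |y|^{2-N} = \frac{\omega_{m-1}}{m}\,s^m\,R^{2-N}$ with no loss. Feeding this into the representation yields an error of relative size $O(\rho^2/m^2) = O(N^{-1})$ after dividing by $v(\bar z)$. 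This is not a quantitative concentration estimate to be proved — it is a one-line consequence of comparing $\Delta v$ to a genuinely harmonic function. Without that comparison, your Step 2 has no proof, and the "kernel concentrates at $\bar z$" heuristic would point you in the wrong direction if you tried to supply one.
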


\vskip1mm
To  evaluate the integral on the left in \eqr{e:IFWT}, we first   fix $x$, then slice the $y$ integral by level sets $|y| =r$, and then  integrate over $x$.  It would be  natural to slice by the distance to $\bar{y}$, but $v(x,y)$ has a rotational symmetry in $y$  that forces us to slice the other way.

  \begin{figure}[htbp]
\includegraphics[totalheight=.25\textheight, width=.42\textwidth]{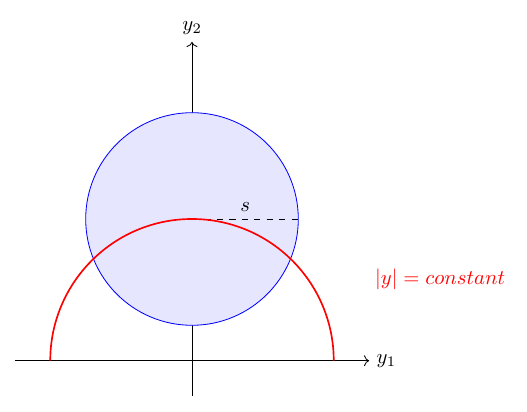}
 \caption*{For each fixed $x$, we integrate over all $y$ with $|y - \bar{y}|^2 \leq s^2 = (\beta \, R)^2 - |x|^2$ (this is the shaded region).  To evaluate the $y$-integral, we slice by $|y| =r$ constant.}
\end{figure}
 
   The  $|y| =r$  slices are   intersections of an $N$-dimensional ball with  spheres centered outside of the ball.  The volumes of the  slices concentrate sharply near certain radii, leading to
   gaussian integrals in the limit.  This is a concentration of measure phenomenon, but with a twist.  The usual concentration of measure is when a high-dimensional
   sphere is sliced by latitudes, i.e., intersected with spheres centered at  a pole, and the measure concentrates near the equator.  Here, the slices are centered at $0 $ which is outside of  $   B_{\beta \, R}(\bar{y})$ since $\beta < 1$.

\vskip1mm
A term is $O(N^{-1})$ if it is bounded by a constant times $N^{-1}$; this constant may depend on $\beta$ and $\tau$, but not on $N$ or $x$. These bounds will be integrated in $x$, so we need the  $x$ dependence, but the $\beta$ dependence for an $O(N^{-1})$ term
disappears as $N \to \infty$.

\begin{proof}[Proof of Lemma \ref{l:setitup}]
Set $\rho = \beta \, R$.
The solid elliptic mean value property   on $\RR^m$ gives 
\begin{align}	\label{e:gottacomeback}
	 v(\bar{z}) &= \frac{m}{\omega_{m-1}} \, \rho^{-m} \, \int_{B_{\rho}(\bar{z})} v - \frac{1}{\omega_{m-1}} \, 
	\int_0^{\rho} 
	\left\{ (s^{1-m} - s \, \rho^{-m}) \,
	\int_{B_s (\bar{z})} \Delta v \, \right\} ds \, ,
\end{align}
where the last term would vanish if $v$ was actually harmonic. 
 Lemma \ref{l:expandDeltav} gives on $B_{\rho} (\bar{z})$ that 
	$|\Delta v| \leq \frac{c}{m} \,  (2\, N)^{ - \frac{n}{2}} \, |y|^{2-N}$
 where $c$ depends on $\tau$ (and $\sup |u_t| + |u_{tt}|$), but not on $N$.  Thus,  the mean value property for the harmonic function $|y|^{2-N}$ on $\RR^m$ for $r\leq \rho$ gives
\begin{align}
	\int_{B_s(\bar{z})} |\Delta v| \leq \frac{c}{m}  \,  (2\, N)^{ - \frac{n}{2}} \, \int_{B_s(\bar{z})} |y|^{2-N}
	 = \frac{c}{m^2}   (2\, N)^{ - \frac{n}{2}} \,  \, s^m \, \omega_{m-1} \,  R^{2-N} 
	 \, .
\end{align}
Using this in \eqr{e:gottacomeback}  gives 
  that
\begin{align}
	 R^{m-2} \, v(\bar{z}) &= O(N^{-1})   +
	 \frac{m}{\omega_{m-1}} \, (\beta \, R)^{-m}\, R^{m-2} \, \int_{B_{\beta \, R}(\bar{z})} v
	 \, ,
\end{align}
so we see that \eqr{e:IFWT}
gives  \eqr{e:elliptLS} and, thus, the log Sobolev (by Proposition \ref{p:equiv}).
\end{proof}
 
  We will need some notation:
  \begin{itemize}
\item Let $\gamma(\theta)= \gamma (N , \theta)$ be the fraction of the volume of $\SS^{N-1}$ in a geodesic ball of radius $\theta$.
  Clearly, $\gamma(0) = 0$, $\gamma (\pi/2) = \frac{1}{2}$ and $\gamma(\pi) = 1$. 
 \end{itemize} 

 \begin{figure}[htbp]
\includegraphics[totalheight=.22\textheight, width=.3\textwidth]{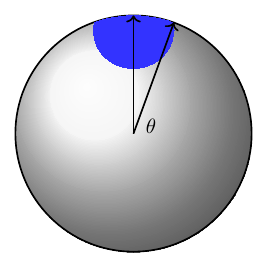}
 \caption*{$\gamma (\theta)=\frac{\Vol ({\color{blue} Region})}{\Vol ({\bf{S}}^{N-1})}$.}
\end{figure}
 
\begin{itemize}
\item  Given $r \in (R-s, R+s)$ with $s^2  =(\beta \, R)^2 - |x|^2 $, 
  the portion of the sphere $\partial B_r (0)$ inside $B_s (\bar{y})$ is a spherical cap. 
  Let 
   $  \theta (x,r)= \theta (\tau ,  \beta , N , x, r)$ be the angle  from the north pole, so that $r \, \theta (x,r)$ is the intrinsic distance from the north pole to the boundary.
\end{itemize}

\vskip4mm

  The next lemma slices the integrand to write it in terms of integrals with respect to the
   ``weight function''   $h (x,r)= r \, \gamma (\theta(x,r))$.

\begin{Lem}	\label{c:slicingit}
We have that
\begin{align}
		 \frac{1}{\omega_{m-1}} \,   \int_{B_{\beta \, R}(\bar{z})} v&=
	  (4\pi)^{ - \frac{n}{2}} \left( 1+ O(N^{-1}) \right)  \,  \int_{B_{\beta \, R} (0)} 
	 \, \int_{R-s}^{R+s} u (x, r^2/(2\, N))  \, h(x,r) \, dr
	  \, 
	 dx   \,  .
\end{align}
\end{Lem}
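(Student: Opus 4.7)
The plan is to carry out the slicing by Fubini in two nested steps, express $v$ in terms of $u$, and finally recognize the dimensional prefactor as $(4\pi)^{-n/2}$ up to a multiplicative $1+O(N^{-1})$ via Stirling's formula.

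First I would use the product structure of the domain. Since $\bar{z}=(0,\bar{y})$, a point $(x,y)\in \RR^n\times\RR^N$ lies in $B_{\beta R}(\bar{z})$ exactly when $|y-\bar{y}|^2 \leq (\beta R)^2 - |x|^2$. So the set of admissible $x$ is $B_{\beta R}(0)\subset \RR^n$, and for each such $x$ the $y$--slice is the Euclidean ball $B_s(\bar{y})\subset \RR^N$ with $s^2 = (\beta R)^2 - |x|^2$. Fubini gives
\begin{align*}
\int_{B_{\beta R}(\bar{z})} v \,dy\,dx = \int_{B_{\beta R}(0)} \left(\int_{B_s(\bar{y})} v(x,y)\,dy\right) dx.
\end{align*}

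Next I would rewrite $v$ in a form that peels off $u$. From the definitions \eqr{e:defu1} and \eqr{e:othervxy}, together with $\tau = r^2/(2N)$ and $m = n+N$,
\begin{align*}
v(x,y) = r^{2-m}\e^{-f(x,\tau)} = r^{2-m}\,\tau^{n/2} u(x,\tau) = (2N)^{-n/2}\, r^{2-N}\, u(x, r^2/(2N)).
\end{align*}
Since $v(x,y)$ depends on $y$ only through $r=|y|$, I would slice the inner integral by the level sets $\partial B_r(0)\cap B_s(\bar{y})$, which are spherical caps on $\partial B_r(0)$ symmetric about the axis through $\bar{y}$. By the very definition of $\theta(x,r)$, the cap has $(N-1)$-dimensional area $\omega_{N-1}\, r^{N-1}\, \gamma(\theta(x,r))$. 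Therefore, using the coarea/spherical-shell formula,
\begin{align*}
\int_{B_s(\bar{y})} v(x,y)\,dy = \omega_{N-1}(2N)^{-n/2}\int_{R-s}^{R+s} r\, \gamma(\theta(x,r))\, u(x, r^2/(2N))\,dr,
\end{align*}
where the $r^{2-N}$ from $v$ exactly cancels the $r^{N-1}$ from the sphere area, leaving the weight $h(x,r)=r\gamma(\theta(x,r))$.

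Finally I would collect constants and identify the dimensional prefactor. Dividing by $\omega_{m-1}$ leaves the ratio
\begin{align*}
\frac{\omega_{N-1}}{\omega_{m-1}}(2N)^{-n/2} = \frac{\Gamma(m/2)}{\pi^{n/2}\,\Gamma(N/2)}(2N)^{-n/2}.
\end{align*}
By Stirling's asymptotic $\Gamma(x+a)/\Gamma(x) = x^a(1+O(1/x))$ applied with $x=N/2$ and $a=n/2$, one has $\Gamma(m/2)/\Gamma(N/2) = (N/2)^{n/2}(1+O(N^{-1}))$, whence the prefactor becomes $(4\pi)^{-n/2}(1+O(N^{-1}))$. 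Combining these three ingredients yields the claimed identity. The only step that requires a bit of care is bookkeeping in the Stirling expansion to ensure the error is genuinely $O(N^{-1})$ (independent of $x$), but since this ratio does not depend on $x$ at all, that is routine; the geometric content really is in the spherical-cap slicing that produces the weight $h(x,r)$.
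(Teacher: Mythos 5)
Your proof is correct and follows essentially the same route as the paper's: Fubini in the product structure, rewrite $v=(2N)^{-n/2}r^{2-N}u(x,r^2/(2N))$, slice the $y$-integral radially so the spherical cap gives the weight $h(x,r)=r\,\gamma(\theta(x,r))$, then identify the prefactor via Gamma-function asymptotics. The only cosmetic difference is that you invoke $\Gamma(x+a)/\Gamma(x)=x^a(1+O(1/x))$ directly, whereas the paper packages the same Stirling computation as Lemma \ref{l:gamma}; these are equivalent.
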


\begin{proof}
Set $\rho = \beta \, R$.
Since $v= |y|^{2-N} \, (2\, N)^{ - \frac{n}{2}} \, u (x, |y|^2/2N)$, 
Fubini's theorem gives that 
\begin{align}
		 \int_{B_{\rho}(\bar{z})} v&=
	  (2\, N)^{- \frac{n}{2}} \, \int_{B_{\rho} (0)} \int_{B_s (\bar{y})} |y|^{2-N} \, u (x , |y|^2/(2\, N)) \, dy\, dx   \,  .
\end{align}
The definitions of $\gamma(\theta)$ and $\theta (r)$ give that
 \begin{align}	\label{t:slicer1}
	  \int_{B_s (\bar{y})} |y|^{2-N} \, u (x , |y|^2/(2\, N)) \, dy &= \int_{R-s}^{R+s} r^{2-N} \, u (x, r^2/(2\, N)) \,  \omega_{N-1} \, r^{N-1} \, \gamma(\theta(r)) \, dr \notag \\
	  &=  \omega_{N-1} \,  \int_{R-s}^{R+s}  u (x, r^2/(2\, N))  \,  h(x,r) \, dr
	  \, .
\end{align}
The lemma follows    since $N^{\frac{n}{2}}\,\frac{\omega_{m-1}}{\omega_{N-1}} = (2\,\pi)^{\frac{n}{2}} + O(N^{-1})$ by Lemma \ref{l:gamma}. 
\end{proof}

\subsection{Estimates on the weight function}

Define the total mass $\mu (  x)= \mu (\tau,N,\beta,x)$ by
\begin{align}
	  \mu ( x) = \int_{R-s}^{R+s} h ( x,r) \, dr \, .	\label{e:defmu}
\end{align}
The next proposition is the key estimate on $\mu$, giving precise asymptotics for the total mass.  This will rely on sharp asymptotics for the volumes of the slices.

\begin{Pro}	\label{c:mux}
The total mass $\mu(\tau , N,\beta , x)$ is given by 
\begin{align}
	\mu (x) = (1+ O(N^{-1})) \, \frac{\beta^{N-3}}{\sqrt{\pi \, \tau}  } \,    \int_{0}^{s}  \sigma^2 \, \left( 1 - \frac{\sigma^2  + |x|^2}{2\, N \,  \beta^2 \, \tau} \right)^{ \frac{N-3}{2}} \, d\sigma \, .
\end{align}
\end{Pro}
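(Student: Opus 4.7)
The plan is to obtain an exact formula for $\mu(x)$ from the mean value property and then verify that the right-hand side of the claim reproduces this exact value up to a multiplicative $(1+O(N^{-1}))$ factor, by evaluating the $\sigma$-integral in closed form and applying Stirling's formula.

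First, I would recast $\mu$ as a Newtonian-potential integral. By the definitions of $\gamma$ and $\theta(x,r)$, the $(N-1)$-dimensional area of the spherical cap $\partial B_r(0)\cap B_s(\bar{y})$ equals $\omega_{N-1}r^{N-1}\gamma(\theta(x,r))$, so slicing $B_s(\bar{y})$ by the level sets $\{|y|=r\}$ gives
\[
\int_{B_s(\bar{y})}|y|^{2-N}\,dy=\omega_{N-1}\int_{R-s}^{R+s} r\,\gamma(\theta(x,r))\,dr=\omega_{N-1}\,\mu(x).
\]
Since $s<\beta R<R=|\bar{y}|$, the origin is outside $\overline{B_s(\bar{y})}$, so $|y|^{2-N}$ is harmonic on a neighborhood of $\overline{B_s(\bar{y})}$ and the mean value property yields
\[
\int_{B_s(\bar{y})}|y|^{2-N}\,dy=\Vol(B_s)\,R^{2-N}=\frac{\omega_{N-1}s^N R^{2-N}}{N},
\]
hence $\mu(x)=s^N R^{2-N}/N$ \emph{exactly}.

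Next, I would rewrite the claimed right-hand side in a matching form. Using $(\beta R)^2=s^2+|x|^2$ and $2N\beta^2\tau=\beta^2 R^2$, the factor $\beta^{N-3}\bigl(1-(\sigma^2+|x|^2)/(2N\beta^2\tau)\bigr)^{(N-3)/2}$ collapses to $(s^2-\sigma^2)^{(N-3)/2}R^{3-N}$, so the proposed right-hand side equals
\[
\frac{1}{R^{N-3}\sqrt{\pi\tau}}\int_0^s \sigma^2(s^2-\sigma^2)^{(N-3)/2}\,d\sigma.
\]
The substitution $t=\sigma^2/s^2$ reduces this integral to Euler's Beta function, giving $\tfrac{s^N\sqrt{\pi}}{4}\cdot\Gamma((N-1)/2)/\Gamma(N/2+1)$. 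Combined with $\sqrt{\tau}=R/\sqrt{2N}$, the right-hand side simplifies to $\tfrac{s^N\sqrt{2N}}{4R^{N-2}}\cdot\Gamma((N-1)/2)/\Gamma(N/2+1)$.

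To finish, I would apply Stirling: since $\Gamma(N/2+1)=(N/2)\,\Gamma(N/2)$ and $\Gamma(N/2)/\Gamma((N-1)/2)=\sqrt{(N-1)/2}\,(1+O(N^{-1}))$, one gets $\Gamma(N/2+1)/\Gamma((N-1)/2)=\tfrac{N\sqrt{2N}}{4}(1+O(N^{-1}))$. Plugging this in shows the right-hand side equals $\tfrac{s^N R^{2-N}}{N}(1+O(N^{-1}))=\mu(x)(1+O(N^{-1}))$, which is what we want. The main obstacle is simply careful bookkeeping in the Stirling step; the $O(N^{-1})$ error is automatically uniform in $x$, since the only $x$-dependence on either side enters through $s$ and the combination $\beta^2 R^2=s^2+|x|^2$, which cancels cleanly.
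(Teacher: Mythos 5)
Your proof is correct, and it takes a genuinely different --- and in fact cleaner --- route than the paper's. You observe that $\omega_{N-1}\,\mu(x) = \int_{B_s(\bar{y})} |y|^{2-N}\,dy$ by the co-area slicing, and then apply the solid mean value property for the harmonic function $|y|^{2-N}$ (valid since $s \leq \beta R < R = |\bar y|$ keeps the singularity outside the ball) to obtain the \emph{exact} identity $\mu(x) = s^N R^{2-N}/N$. Meanwhile, you evaluate the claimed right-hand side in closed form via the substitution $\beta^2 R^2 = s^2 + |x|^2$, $2N\tau = R^2$, which collapses it to a Beta integral $\tfrac{s^N\sqrt\pi}{4}\,\Gamma(\tfrac{N-1}{2})/\Gamma(\tfrac{N}{2}+1)$, and a single Stirling-type asymptotic $\Gamma(z+\tfrac12)/\Gamma(z) = \sqrt{z}\,(1+O(z^{-1}))$ closes the gap. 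The paper instead works directly with the integral $\mu = \int h$ via an integration by parts (using an antiderivative $g = \tfrac12 r h$), the asymptotics of $\gamma'(\theta)\theta'$ from Lemma \ref{l:thetar}, and two successive changes of variables; the $O(N^{-1})$ errors are introduced at the level of the integrand rather than in one final Gamma-ratio estimate. Your approach buys an exact closed form for $\mu(x)$ and a more transparent identification of where the single asymptotic error enters (making the uniformity in $x$ immediate), though the paper's machinery for handling $h$ and $\gamma'(\theta)\theta'$ is reused in the subsequent concentration estimate (Lemma \ref{l:Urx}), which your mean-value shortcut does not replace.
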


It will  be useful to define $\bar{r} = \bar{r} (N,\beta, \tau, x , r) = \sqrt{R^2 - s^2}$.

\begin{Lem}	\label{l:thetar}
We have that $\cos \theta (x,r) = \frac{r^2 +  \bar{r}^2}{2\, R \, r}$ and 
\begin{align}	\label{e:thetar}
	\theta' (x,r) = - \frac{1}{\sin \theta (x,r)} \,  \left( \frac{r^2  -\bar{r}^2}{2\, R \, r^2} \right) \, .
\end{align}
In particular,  $h> 0$ on $(R-s, R+s)$ and $h=0$ at the end points. Moreover, 
\begin{align}
	\gamma' (\theta(x,r)) \, \theta' (x,r) = - 
\frac{1+ O(N^{-1})}{4\, \sqrt{\pi \tau}\, r^2} \,   \left( r^2 -\bar{r}^2 \right) \, 
	 \left( 1 - \frac{ (r^2 + \bar{r}^2)^2}{8\, N \, \tau \, r^2} \right)^{ \frac{N-3}{2}} \, .
\end{align}
\end{Lem}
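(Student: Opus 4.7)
The plan is to work out $\theta(x,r)$ by explicit geometry, then differentiate, then combine with the standard formula for $\gamma'$ in terms of a $\sin^{N-2}$ density.

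First I would compute $\cos \theta(x,r)$. A point $y$ is on the boundary $\partial B_r(0) \cap \partial B_s(\bar{y})$ iff $|y|^2 = r^2$ and $|y-\bar{y}|^2 = s^2$, which expands to $\langle y , \bar{y}\rangle = \tfrac{1}{2}(r^2+R^2-s^2)$. Dividing by $|y|\,|\bar{y}|=rR$ and using $\bar{r}^2 = R^2 - s^2$ yields $\cos\theta(x,r) = \frac{r^2+\bar{r}^2}{2Rr}$. Differentiating $\frac{r}{2R}+\frac{\bar{r}^2}{2Rr}$ in $r$ (note $\bar{r}$ does not depend on $r$) gives $-\sin\theta \cdot \theta'(x,r) = \frac{r^2 - \bar{r}^2}{2Rr^2}$, which is \eqref{e:thetar}. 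For the endpoints: at $r=R\pm s$, the formula gives $\cos\theta = \frac{(R\pm s)^2 + R^2 - s^2}{2R(R\pm s)} = 1$, so $\theta=0$ and $\gamma(\theta)=0$, hence $h(x,r) = r\gamma(\theta(x,r))=0$. Inside $(R-s,R+s)$ one checks $\cos\theta<1$, so $\theta>0$ and $\gamma(\theta)>0$, giving $h>0$.

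Next I would compute $\gamma'(\theta)$. The spherical cap formula gives
\begin{equation*}
\gamma(\theta) = \frac{\omega_{N-2}}{\omega_{N-1}} \int_0^\theta \sin^{N-2}(\phi)\, d\phi,
\end{equation*}
so $\gamma'(\theta) = \frac{\omega_{N-2}}{\omega_{N-1}} \sin^{N-2}(\theta)$. Multiplying by $\theta'(x,r)$ from the previous step picks up one cancellation $\sin^{N-2}\theta / \sin\theta = \sin^{N-3}\theta$. Using $\cos\theta = \frac{r^2+\bar{r}^2}{2Rr}$ and $R^2 = 2N\tau$ to write $\sin^2\theta = 1 - \frac{(r^2+\bar{r}^2)^2}{8N\tau\, r^2}$, I get
\begin{equation*}
\gamma'(\theta(x,r))\,\theta'(x,r) = -\frac{\omega_{N-2}}{\omega_{N-1}}\cdot\frac{r^2-\bar{r}^2}{2Rr^2}\left(1-\frac{(r^2+\bar{r}^2)^2}{8N\tau r^2}\right)^{(N-3)/2}.
\end{equation*}

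Finally, to match the asymptotic constant in the statement, I would invoke the ratio asymptotics (essentially Lemma \ref{l:gamma}, or directly Stirling applied to $\omega_k = 2\pi^{k/2}/\Gamma(k/2)$):
\begin{equation*}
\frac{\omega_{N-2}}{\omega_{N-1}} = \frac{\Gamma(N/2)}{\sqrt{\pi}\,\Gamma((N-1)/2)} = \sqrt{\frac{N}{2\pi}}\,(1+O(N^{-1})).
\end{equation*}
Combining with $\frac{1}{2R} = \frac{1}{2\sqrt{2N\tau}}$ produces the prefactor $\frac{1+O(N^{-1})}{4\sqrt{\pi\tau}\,r^2}$, which yields the claimed formula.

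The proof is entirely computational; the main obstacle is just bookkeeping. The one point requiring mild care is that the exponent comes out as $(N-3)/2$ rather than $(N-2)/2$ because of the $1/\sin\theta$ from $\theta'$ combining with the $\sin^{N-2}\theta$ from $\gamma'$; one has to keep track of this and check that the surviving constant matches after the $\omega_{N-2}/\omega_{N-1}$ asymptotic is invoked.
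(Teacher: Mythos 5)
Your proof is correct and follows essentially the same route as the paper: derive $\cos\theta$ from the intersection geometry, differentiate, use $\gamma'(\theta)=\frac{\omega_{N-2}}{\omega_{N-1}}\sin^{N-2}\theta$, and invoke the sphere-volume ratio asymptotics (Lemma \ref{l:gamma} / Stirling) together with $R^2=2N\tau$ to get the prefactor and the $\sin^{N-3}\theta$ factor. The only cosmetic differences are that the paper obtains $\cos\theta$ by parametrizing a point of the circle in a $2$-plane rather than via the inner product $\langle y,\bar y\rangle$, and it argues the vanishing/positivity of $h$ geometrically via the triangle inequality rather than by checking $\cos\theta=1$ algebraically; both of your variants are fine for the relevant case $\beta<1$, where $s<R$.
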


\begin{proof}
The angle $\theta = \theta (r)$ satisfies $(r\, \sin \theta , r \, \cos \theta) \in \partial B_s ((0,R)) \subset \RR^2$, so
\begin{align}
	\rho^2 - |x|^2 = s^2 = r^2 \, \sin^2 \theta + (r\, \cos \theta - R)^2 = r^2 + R^2 - 2 \, R \, r \, \cos \theta \, .
\end{align}
Solving for $\cos \theta$ gives the first claim.  Differentiating the first claim in $r$ gives
\begin{align}
	- \sin (\theta) \, \theta_r = \left( \frac{ r^2 +   R^2 - s^2}{2\, R \, r} \right)_r = \frac{ 2\, r }{2\, R \, r} - \frac{ r^2 +  R^2 - s^2}{2\, R \, r^2} = \frac{r^2 -  R^2 + s^2}{2\, R \, r^2} \, . \notag
\end{align}
The next claim is geometrically clear from the triangle inequality since $r=R-s$ is the first time that $\partial B_r$ intersects{\footnote{There is a small subtlety if $s=R$, where the vanishing of $h$ at $r=0$ comes from the factor $r$ in $h$.}} $B_{s} (\bar{y})$ 
and they continue to intersect up until $r = R+s$.  

Define $c = c(N) =\frac{\omega_{N-1}}{\omega_{N-2}}$. Since  
	$\omega_{N-1} = \int_{0}^{\pi} \omega_{N-2} \, \sin^{N-2}(t) \, dt$, we see that 
  $\gamma(\theta) = \frac{1}{c} \, \int_0^{\theta} \sin^{N-2} (t) \, dt$.
Lemma \ref{l:gamma}
	gives  that
 \begin{align}	
 	c= \frac{\omega_{N-1}}{\omega_{N-2}} =  \left( \frac{2\pi}{N} \right)^{ \frac{1}{2}} (1 + O(N^{-1}))
	\, . \label{e:fromc}
\end{align}
Finally, the first two claims and \eqr{e:fromc} give
\begin{align}
	\gamma'(\theta(x,r)) \, \theta'(x,r) &=- \frac{1}{c\, R} \, \sin^{N-3} (\theta(x,r)) \,  \left( \frac{r^2  -R^2 + s^2}{2\, r^2} \right) \notag \\
	 &=- \frac{1+ O(N^{-1})}{2\, \sqrt{\pi \tau}} \,   \left( \frac{r^2 -R^2 + s^2}{2\, r^2} \right) \, 
	 \left( 1 - \frac{ (r^2 + R^2 -s^2)^2}{8\, N \, \tau \, r^2} \right)^{ \frac{N-3}{2}} \, .
\end{align}
\end{proof}

\begin{proof}[Proof of Proposition \ref{c:mux}]
Within this proof, we will write $\approx$ for equalities up to multiplication by $(1+ O(N^{-1}))$.
If we define $	g= \frac{1}{2} \, r \, h $, then $g(R-s) = g(R+s) = 0$ and $g' = h(r) + \frac{1}{2} \, r^2 \, \gamma'(\theta) \, \theta'$.
  Therefore,  integration by parts and Lemma \ref{l:thetar} give
\begin{align}
	\mu &= \int_{R-s}^{R+s} h(r) \, dr \approx \frac{1}{8\, \sqrt{\pi \, \tau}}  \,    \int_{R-s}^{R+s} \left( r^2 - \bar{r}^2 \right) \,
	\left( 1 -  \frac{(r^2 + \bar{r}^2)^2}{8\, N \, \tau \, r^2} 
	\right)^{ \frac{N-3}{2}}\, dr  \, .
\end{align}
To evaluate this, we will use the change of variables  $t= \frac{\bar{r}^2}{r} + r$, so that 
\begin{enumerate}
\item $t$ goes monotonically from $2\, R$ to $2 \, \bar{r}$ as $r$ goes from $R-s$ to $\bar{r}$ and then back to $2\, R$ as $r \to R+s$. The inverse maps $r_{\pm}(t)$ satisfy  $r^2_{\pm}  (t) \equiv \frac{t^2}{2} - \bar{r}^2 \pm t \, \sqrt{ \frac{t^2}{4} - \bar{r}^2}$. 
\item The differentials of $r$ and $t$ are related by $(r^2 - \bar{r}^2) \, dr = r^2 \,  dt$.
\end{enumerate}
Using this change of variables (and taking advantage of cancellation), we have
\begin{align}
	8\, \sqrt{\pi \, \tau} \, \mu &\approx  \int_{2\, \bar{r}}^{2\, R} \left( r_+^2 - r_-^2 \right) \, 
	\left(1 - \frac{t^2}{8\, N \, \tau} 
	\right)^{ \frac{N-3}{2}} \, dt  \approx  2\, \int_{2\, \bar{r}}^{2\, R}  t \, \sqrt{ \frac{t^2}{4}  - \bar{r}^2} \, \left(1 - \frac{t^2}{8\, N \, \tau} 
	\right)^{ \frac{N-3}{2}} \, dt \, . \notag
\end{align}
Making another change of variable $\sigma = \sqrt{ \frac{t^2}{4}  -\bar{r}^2}$, so that $t\, dt = 4 \, \sigma \, d\sigma$, this becomes
\begin{align}
	\sqrt{\pi \, \tau} \, \mu &\approx    \int_{0}^{s}  \sigma^2 \, \left( 1 - \frac{\sigma^2 +\bar{r}^2}{2\, N \,  \tau} \right)^{ \frac{N-3}{2}} \, d\sigma \, .
\end{align}
This gives the claim since $ 1 - \frac{\sigma^2 +\bar{r}^2}{2\, N \,  \tau} = \beta^2 - \frac{\sigma^2 +|x|^2}{2\, N \,  \tau}$.
\end{proof}

It will be useful to have the following formula for weighted exponential integrals
\begin{align}	\label{e:wtdexp}
	\left( 4 \, \pi \, \tau_0 \right)^{ \frac{k+1}{2}} = \int_{\RR^{k+1}} \e^{ - \frac{|y|^2}{4 \, \tau_0}} \, dy = \omega_k\, \int_0^{\infty} \sigma^k \, \e^{ - \frac{\sigma^2}{4\, \tau_0}} \, d\sigma \, .
\end{align}

 \subsection{Concentration}
 
 The measure $h(x,r)$ is spread out from $r=R-s$ to $r=R+s$ and has total mass $\mu (x)$.
 The  next lemma says that the  measure $h(x,r)$  concentrates around $r=\bar{r}$ when we integrate against $u(x, r^2/2N)$.

\begin{Lem}	\label{l:Urx}
We have that
\begin{align}	 
	\lim_{\beta \to 1} \, \left\{  \lim_{N\to \infty} \, \left|
	 \beta^{-m} \, \int_{B_{\beta \, R} (0)} \left\{ 
	  \mu (x) \, u(x,   \bar{r}^2/2N)
	 -
	 \, \int_{R-s}^{R+s}  u (x, r^2/2N)  \, h(x,r) \, dr
	 \right\}
	  \, 
	 dx  
	   \right| \right\} = 0	 \, . \notag
\end{align}
\end{Lem}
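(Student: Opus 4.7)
The plan is to quantify how sharply the weight $h(x,r)\,dr$ concentrates about $r=\bar r$. Write the quantity to bound as $E(x)=\int_{R-s}^{R+s}\bigl[u(x,r^2/2N)-u(x,\bar r^2/2N)\bigr]\,h(x,r)\,dr$; by the mean value theorem in $t$ one has $|u(x,r^2/2N)-u(x,\bar r^2/2N)|\leq V(x)\,|r^2-\bar r^2|/(2N)$, where $V(x)=\sup_{t\in J}|u_t(x,t)|$ and $J$ is the bounded interval traced out by $r^2/2N$ as $r$ ranges over $[R-s,R+s]$. For fixed $\beta<1$ this $J$ stays inside a compact subset of $(0,\infty)$, so it suffices to control $\int|r^2-\bar r^2|\,h(x,r)\,dr$.

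I will establish the second moment bound $\int_{R-s}^{R+s}(r^2-\bar r^2)^2\,h(x,r)\,dr\leq C_{\beta,\tau}\,N\,\mu(x)$ by repeating, at a higher power, the integration by parts and change of variables used in the proof of Proposition~\ref{c:mux}. Since $\gamma(\theta(x,\cdot))$ vanishes at $r=R\pm s$ and $(r^2-\bar r^2)^3/6$ is an antiderivative of $(r^2-\bar r^2)^2\,r$, integration by parts reduces the above to $-\tfrac{1}{6}\int(r^2-\bar r^2)^3\,\gamma'(\theta)\theta'(r)\,dr$. Using Lemma~\ref{l:thetar} to substitute $\gamma'(\theta)\theta'(r)$ introduces an additional factor $(r^2-\bar r^2)/r^2$ and the near-Gaussian weight $\phi(t)=(1-t^2/(8N\tau))^{(N-3)/2}$. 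The two-to-one change of variable $t=\bar r^2/r+r$ (with Jacobian $dr=r^2\,dt/(r^2-\bar r^2)$), combined with the algebraic identity $(r_+^2-\bar r^2)^3-(r_-^2-\bar r^2)^3=8t\sigma^3(4\sigma^2+\bar r^2)$, where $\sigma=\sqrt{t^2/4-\bar r^2}$, which follows from $r_++r_-=t$ and $r_+r_-=\bar r^2$, and the further change $t\,dt=4\sigma\,d\sigma$, convert the $r$-integral to $\tfrac{4(1+O(N^{-1}))}{3\sqrt{\pi\tau}}\int_0^s\sigma^4(4\sigma^2+\bar r^2)\phi\,d\sigma$. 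Comparison with the formula for $\mu(x)$ in Proposition~\ref{c:mux}, together with the Gaussian-like tail of $\phi$ (of variance $\sim\tau$) giving $\int\sigma^4\phi\,d\sigma/\int\sigma^2\phi\,d\sigma=O(\tau)$, then yields the desired bound via $\bar r^2=O(N)$.

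Cauchy--Schwarz now gives $\int|r^2-\bar r^2|\,h\,dr\leq (C_{\beta,\tau}N\mu(x))^{1/2}\mu(x)^{1/2}=\sqrt{C_{\beta,\tau}N}\,\mu(x)$, so $|E(x)|\leq O(N^{-1/2})\,V(x)\,\mu(x)$. Proposition~\ref{c:mux} also gives that $\beta^{-m}\mu(x)$ is, uniformly in $N$ and in $\beta$ close to $1$, dominated by a constant times the Gaussian $e^{-|x|^2/(4\beta^2\tau)}$; since $V(x)$ inherits Gaussian-type spatial decay from the regularity of $u$, the integral $\int V(x)\beta^{-m}\mu(x)\,dx$ is bounded independently of $N$ and $\beta$. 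Therefore $\beta^{-m}\int_{B_{\beta R}(0)}|E(x)|\,dx=O(N^{-1/2})\to 0$ as $N\to\infty$ for each fixed $\beta$, and the outer $\beta\to 1$ limit is trivial. The main obstacle is the algebraic bookkeeping in the second moment estimate: the identity producing the factor $\bar r^2$ (rather than the larger $\bar r^4$) on the right-hand side is crucial, since only with this factor does the $N^{-2}$ from the time-scaling combine with the $N^{1/2}$ loss from Cauchy--Schwarz to yield $N^{-1/2}$.
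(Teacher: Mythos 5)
Your argument is correct and it takes a genuinely different route from the paper.  The paper also begins with the mean value theorem, $|u(x,r^2/2N)-u(x,\bar r^2/2N)|\leq C|r^2-\bar r^2|/N$, but then bounds the resulting first moment $\int|r^2-\bar r^2|\,h\,dr$ directly: after integrating by parts against $G(r)=\tfrac14(r^2-\bar r^2)^2\gamma(\theta)$ and invoking Lemma~\ref{l:thetar}, the paper discards the factor $(r^2-\bar r^2)/r^2\leq1$ to reduce to $\int(r^2-\bar r^2)^2\phi\,dr$, and the resulting estimate is lossy by a factor of order $\sqrt{N/\tau}$ precisely because that ratio is in fact $O(\sqrt{\tau/N})$ where the weight concentrates.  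This is why the paper's bound $\lim_{N\to\infty}\int I^{\pm}\,dx\leq C(1-\beta^2)$ has a residual error that only dies under the outer $\beta\to1$ limit.  Your approach instead pushes the same integration-by-parts scheme to a second moment, where the key identity $(r_+^2-\bar r^2)^3-(r_-^2-\bar r^2)^3=8\,t\,\sigma^3(4\sigma^2+\bar r^2)$ (correct, since $r_++r_-=t$, $r_+r_-=\bar r^2$, and $t^2-3\bar r^2=4\sigma^2+\bar r^2$) produces the factor $\bar r^2$ rather than $\bar r^4$; combined with the standard moment ratios of the near-Gaussian weight $\phi$ this gives $\int(r^2-\bar r^2)^2h\leq C\,N\tau^2\,\mu$ uniformly in $x$ and $\beta$, and Cauchy--Schwarz then yields $\int|r^2-\bar r^2|\,h\leq C\sqrt{N}\,\tau\,\mu$, so that $|E(x)|=O(N^{-1/2})\,\mu(x)$.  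This is strictly sharper: you show that the inner $N\to\infty$ limit alone already vanishes for each fixed $\beta<1$, making the $\beta\to1$ limit trivial for this lemma (it is of course still needed in Lemma~\ref{l:asympto} to send $\bar\tau=(1-\beta^2)\tau\to0$).  What the paper's version buys is brevity: no Cauchy--Schwarz and no cubic algebraic identity, at the cost of leaning on the second limit.  One minor point: your remark that ``$V(x)$ inherits Gaussian-type spatial decay'' is not needed — the paper assumes $|u_t|$ bounded, so $V$ is bounded and the integrability of $\beta^{-m}\mu$ already supplies the Gaussian weight; invoking decay of $V$ is a distraction and slightly weakens the generality of your statement.
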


\begin{proof}
Since $|u_t| \leq C$, it follows that 
$ \left| u(x, r^2/2N) - 
	   u(x, \bar{r}^2/2N) \right| \leq  \frac{C}{N} \,   \left| r^2-  \bar{r}^2  \right| $  and, thus, 
\begin{align}	\label{e:majorstep}
	   \int_{R-s}^{R+s}   h(x,r) \, \left| u(x, r^2/2N) - 
	   u(x,  \bar{r}^2/2N) \right| \, dr  & \leq  \frac{C}{N} \,  \int_{R-s}^{R+s} |r^2-\bar{r}^2|   \, h(x,r) \, dr   \, . \notag
\end{align}
We will divide  into the sub-intervals $[R-s, \bar{r}]$ and $[\bar{r} , R+s]$ and define
\begin{align}
	I^+ (N,\beta, x)\equiv \frac{\beta^{-m}}{N} \,  \int_{\bar{r}}^{R+s} |r^2-\bar{r}^2|  \, h(x,r) \, dr  
\end{align}
and similarly for $I^-(N, \beta, x)$.  
Define a function $G(r) =\left( \frac{r^4}{4} - \frac{r^2}{2} \, \bar{r}^2+ \frac{\bar{r}^4}{4} \right) \, \gamma(\theta )$, 
so $G$ vanishes at $R-s$, $\bar{r}$ and $R+s$.  Furthermore, 
\begin{align}
	G' (r) = \left( \frac{r^4}{4} - \frac{r^2}{2} \, \bar{r}^2 + \frac{\bar{r}^4}{4} \right) \, \gamma'(\theta )\, \theta'+
	 (r^2-\bar{r}^2 )    \,  h(r)  \, , 
\end{align}
where the last term is the integrand for $I^+$.  Below, we will use $C$ to denote a constant that may depend on $n, \tau$, but is independent of $x$, $N$ and $\beta$ (for $\beta \in (1/2, 1)$.
Therefore, integration by parts  and Lemma \ref{l:thetar} give that 
\begin{align}
	I^+ &\leq \frac{C}{N} \,  \beta^{-m} \,  \int_{\bar{r}}^{R+s}  (r^2 - \bar{r}^2)^2 \,  \left( 1 - \frac{ (r + \bar{r}^2/r)^2}{8\, N \, \tau} \right)^{ \frac{N-3}{2}}
	\, dr \notag \\
&=   \frac{C}{N} \,  \int_{\bar{r}}^{R+s}  (r^2 - \bar{r}^2)^2 \,  \left( 1 - \frac{( r- \bar{r}^2/r)^2+ 4\, |x|^2 }{8\, N\, \beta^2 \, \tau} \right)^{ \frac{N-3}{2}}
	\, dr 
	 \, ,
\end{align}
where the second equality also used that $ \frac{( r+ \bar{r}^2/r)^2}{8\, N \, \tau}   =   \frac{( r- \bar{r}^2/r)^2+ 4\, |x|^2}{8\, N \, \tau} + (1-\beta^2) $.
Using a change of variables $\sigma = r - \bar{r}^2/r $, so that $d\sigma = (1+\bar{r}^2/r^2) \, dr$, we get
\begin{align} 
	 I^+ &\leq\frac{C}{N} 
	\, \int_{0}^{2s}  r^2 \, \sigma^2 \,  \left(1 - \frac{\sigma^2+ 4\, |x|^2 }{8\, N \, \beta^2 \, \tau} \right)^{ \frac{N-3}{2}}
	\, \frac{d\sigma}{1+\bar{r}^2/r^2}  \leq  \frac{C}{N} \, \e^{   - \frac{ |x|^2 }{8 \, \beta^2 \, \tau} } \, \int_{0}^{2s}   (\bar{r}^2 + \sigma^2)  \, \sigma^2 \,  \e^{   - \frac{\sigma^2 }{32 \, \beta^2 \, \tau} }	\, d\sigma
	 \, , \notag
\end{align}
where the last inequality used   Lemma \ref{c:taylor} (and $N > 6$).
Using   \eqr{e:wtdexp} to bound the $d\, \sigma$ integrals gives
\begin{align} 
	 I^+ &\leq\frac{C}{N}   \, \e^{   - \frac{ |x|^2 }{8 \, \beta^2 \, \tau} } \,   (\bar{r}^2 + 1)   \leq 	
	 \frac{C}{N}   \, \e^{   - \frac{ |x|^2 }{8 \, \beta^2 \, \tau} } \,  \left( 1 + |x|^2 + (1-\beta)^2\, R^2 \right)  \, .
\end{align}
Integrating in $x$ and taking the limit as $N \to \infty$ gives that
\begin{align}
	\lim_{N \to \infty} \, \int I^+ (N,\beta , x) \, dx \leq C \, (1-\beta^2) \, . 
\end{align}
This goes to zero as $\beta \to 1$.  The estimate for $I^-$ follows similarly, completing the proof.
\end{proof}

\begin{Lem}	\label{l:asympto}
We have that
\begin{align}	\label{e:IbetaN}
	\lim_{N\to \infty} \left\{   \frac{\beta^{-m}}{2\, \tau} \, \int_{B_{\rho} (0)} 
	 \, \mu (x) \, u(x,  \frac{\bar{r}^2}{2N}) \, dx \right\} = \beta^{-n} \, \int_{\RR^n} 
	\e^{ - \frac{|x|^2}{4\, \beta^2 \, \tau}} \, u(x, (1-\beta^2) \,  \tau) \, dx 
	 \, .
\end{align}
\end{Lem}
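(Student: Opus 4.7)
The plan is to substitute the formula for $\mu(x)$ from Proposition \ref{c:mux} into the left-hand side and evaluate the resulting double integral by dominated convergence. After substitution, the bracketed quantity in \eqref{e:IbetaN} becomes, up to a $(1+O(N^{-1}))$ prefactor,
\[
 \frac{\beta^{N-3-m}}{2\tau\sqrt{\pi\tau}}\int_{B_\rho(0)} u\!\left(x,\frac{\bar r^2}{2N}\right) \int_0^{s} \sigma^2\left(1-\frac{\sigma^2+|x|^2}{2N\beta^2\tau}\right)^{\!(N-3)/2}\! d\sigma\, dx.
\]

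Next I would identify the relevant pointwise limits as $N\to\infty$. Since $\bar r^2 = (1-\beta^2)R^2 + |x|^2 = 2N(1-\beta^2)\tau + |x|^2$, for each fixed $x$ one has $\bar r^2/(2N) \to (1-\beta^2)\tau$, so continuity of $u$ gives $u(x,\bar r^2/(2N)) \to u(x,(1-\beta^2)\tau)$. Also, the standard limit $(1-a/N)^N \to \e^{-a}$ yields
\[
\left(1-\frac{\sigma^2+|x|^2}{2N\beta^2\tau}\right)^{\!(N-3)/2}\!\longrightarrow\, \exp\!\left(-\frac{\sigma^2+|x|^2}{4\beta^2\tau}\right),
\]
and the integration domains expand: $s = \sqrt{(\beta R)^2 - |x|^2}\to\infty$ and $B_\rho(0) = B_{\beta R}(0)\to\RR^n$. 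Using $1-t\le \e^{-t}$ and $N\ge 6$, the inner integrand is majorized by $\sigma^2\exp\!\left(-\frac{\sigma^2+|x|^2}{8\beta^2\tau}\right)$, which is $N$-independent; combined with the uniform bound on $u$ (which is assumed bounded on bounded time intervals), this provides a Lebesgue-integrable dominator on $\RR^n\times[0,\infty)$ and justifies the interchange of limit and integration.

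In the limit, the inner Gaussian moment evaluates by \eqref{e:wtdexp} (with $k=2$, $\tau_0=\beta^2\tau$) to
\[
\int_0^\infty \sigma^2\, \e^{-\sigma^2/(4\beta^2\tau)}\, d\sigma = 2\sqrt{\pi}\,\beta^3\tau^{3/2}.
\]
Multiplying by the prefactor,
\[
\frac{\beta^{N-3-m}}{2\tau\sqrt{\pi\tau}}\cdot 2\sqrt{\pi}\,\beta^3\tau^{3/2} = \beta^{N-m} = \beta^{-n}
\]
since $m=N+n$, and the remaining $x$-integral is exactly $\int_{\RR^n} \e^{-|x|^2/(4\beta^2\tau)}\, u(x,(1-\beta^2)\tau)\, dx$, which is the stated right-hand side.

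The main obstacle is making the exchange of limits fully rigorous: although the inner-integrand dominator is uniform in $N$, one must check that the resulting majorant integrates against $u$ over all of $\RR^n$, which requires $u$ to have at most Gaussian growth in $x$ on bounded time intervals (a hypothesis consistent with the class of solutions considered in the paper). A secondary technical point is the handling of $|x|$ close to $\beta R$, where $s\to 0$ and $\bar r^2/(2N)$ approaches $\tau$ rather than $(1-\beta^2)\tau$; this region has vanishing contribution because the $\sigma$-integrand is supported on an interval of length $s$ and carries the additional Gaussian factor $\e^{-|x|^2/(4\beta^2\tau)}$, so no delicate cancellation is needed.
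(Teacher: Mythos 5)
Your proposal is correct and follows essentially the same route as the paper: substitute the asymptotic for $\mu$ from Proposition \ref{c:mux}, identify the pointwise limits of $u(x,\bar r^2/(2N))$ and of the inner $\sigma$-integral (the paper calls it $H(x,N)$), obtain a uniform-in-$N$ Gaussian majorant via the bound of Lemma \ref{c:taylor}, and invoke dominated convergence together with the moment identity \eqref{e:wtdexp} to collapse the prefactor to $\beta^{-n}$. The integrability concern you flag is handled in the paper by the standing assumption (used throughout, e.g.\ in the proof of Lemma \ref{l:bmdeltau}) that $f$ and its derivatives are bounded, which makes $u$ bounded and the majorant $\e^{-|x|^2/(8\beta^2\tau)}$ integrable against it; so nothing further is needed.
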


\begin{proof}
Set $\bar{\tau} = (1-\beta^2) \, \tau$.
We use $\approx$ for terms that are equal up to a factor of $(1+O(N^{-1}))$. 
Set  $H(x,N) = \int_{0}^{s}  \sigma^2 \, \left( 1 - \frac{\sigma^2 +|x|^2}{2\, N \, \beta^2 \, \tau} \right)^{ \frac{N-3}{2}} \, d\sigma$.
Let $I(N)$ denote the term in brackets on the left-hand side of \eqr{e:IbetaN} and use
Proposition \ref{c:mux} to get
\begin{align}
	I (N) =  \frac{\beta^{-n-3}}{2\, \tau\, \sqrt{\pi \, \tau}} \, \int_{B_{\rho} (0)} 	 u(x,  \bar{\tau} + \frac{|x|^2}{2N}) \, H(x,N)
	 \, dx \, .
\end{align}
Observe that for each fixed $x$ and $\beta$, we have that   $ u(x,  \bar{\tau} + \frac{|x|^2}{2N})  \to u(x, \bar{\tau})$ and 
\begin{align}
	\lim_{N\to \infty} H(x,N) &=  \int_{0}^{\infty}  \sigma^2 \, \e^{ - 
	  \frac{\sigma^2 +|x|^2}{4 \, \beta^2 \, \tau} } \, d\sigma = \e^{ - \frac{|x|^2}{4\, \beta^2 \, \tau}} \, 
	    \int_{0}^{\infty}  \sigma^2 \, \e^{ - 
	  \frac{\sigma^2}{4 \, \beta^2 \, \tau} } \, d\sigma = \e^{ - \frac{|x|^2}{4\, \beta^2 \, \tau}} \, \left( 2 \, \sqrt{\pi} \, \beta^3 \, \tau^{ \frac{3}{2}} \right)
	  \, , \notag
\end{align}
where the last equality used  \eqr{e:wtdexp}.
Thus, once we show that
$u(x, \bar{\tau} + \frac{|x|^2}{2N}) \, H(x,N)$ is dominated, 
 the dominated convergence theorem will give
\eqr{e:IbetaN}.

  Since $N>6$ is large,  Lemma \ref{c:taylor}
gives that
\begin{align}
	 \left( 1 - \frac{\sigma^2 +|x|^2}{2\, N \, \beta^2 \, \tau} \right)^{ \frac{N-3}{2}} \leq 	 \left( 1 - \frac{\sigma^2 +|x|^2}{2\, N \, \beta^2 \, \tau} \right)^{ \frac{N}{4}} \leq
	 \e^{ -  \frac{\sigma^2 +|x|^2}{8 \, \beta^2 \, \tau}} =  \e^{ -  \frac{|x|^2}{8 \, \beta^2 \, \tau}} \, \left( \e^{ -  \frac{\sigma^2}{8 \, \beta^2 \, \tau}} \right) \, .
\end{align}
Therefore, we have for every $N$ that
\begin{align}
	H(x,N) \leq   \e^{ -  \frac{|x|^2}{8 \, \beta^2 \, \tau}} \, \int_0^{\infty} \sigma^2 \,  \left( \e^{ -  \frac{\sigma^2}{8 \, \beta^2 \, \tau}} \right) \, d\sigma =  \e^{ -  \frac{|x|^2}{8 \, \beta^2 \, \tau}} \, \left( 2^{ \frac{7}{2}} \, \sqrt{\pi} \, \beta^3 \, \tau^{ \frac{3}{2}} 
	\right) \, , 
\end{align}
which is integrable in $x$, completing the proof.
\end{proof}

\begin{proof}[Proof of Theorem \ref{p:repnf}]
Set $\bar{\tau} = (1-\beta^2) \, \tau$ and $\rho = \beta \, R$.
Lemmas   \ref{c:slicingit} and \ref{l:Urx} give that
\begin{align}
	\lim_{\beta \to 1} \left\{\lim_{N \to \infty} \,    \frac{\beta^{-m} }{2\, \tau \, \omega_{m-1}}\, \int_{B_{\rho}(\bar{z})} v  \right\} &=
	\lim_{\beta \to 1} \left\{\lim_{N \to \infty} \,    \frac{\beta^{-m} }{2\, \tau\,  (4\pi)^{ \frac{n}{2}}   } \,  \int_{B_{\rho} (0)} 
	 \, \int_{R-s}^{R+s} u (x, r^2/(2\, N))  \, h(x,r) \, dr
	  \, 
	 dx \right\} \notag \\
	 &= \lim_{\beta \to 1} \left\{ \lim_{N \to \infty} \,    \frac{\beta^{-m} }{2\, \tau\,  (4\pi)^{ \frac{n}{2}}   } \, 
	 \int_{B_{\rho} (0)} 
	  \mu (x) \, u(x, \bar{\tau} + \frac{|x|^2}{2N}) \, dx
	\right\} \\
	&=    (4\pi)^{ - \frac{n}{2}}\,  
 \lim_{\beta \to 1} \left\{	 \beta^{-n} \, \int_{\RR^n} 
	\e^{ - \frac{|x|^2}{4\, \beta^2 \, \tau}} \, u(x, \bar{ \tau}) \, dx 
	\right\} 
\notag \, , 
\end{align}
where the last equality used Lemma \ref{l:asympto}.
The dominated convergence theorem gives \eqr{e:IFWT} and, thus, Lemma \ref{l:setitup}
 gives the claim.
\end{proof}

\appendix
\section{}

 \begin{Pro}	\label{p:equiv}
 The log Sobolev is a consequence of elliptic monotonicity and  \eqr{e:elliptLS}.
 \end{Pro}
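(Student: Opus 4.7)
The plan is to prove the equivalent form $\cW(u,\tau) \geq n + \frac{n}{2}\log 4\pi$, which by \eqref{e:logsobD0} reads $\int D_0\,u\,dx \geq \frac{n}{2}\log 4\pi$, by combining the heat-kernel representation hidden inside \eqref{e:elliptLS} with the monotonicity of $\cW$ supplied by elliptic monotonicity. First I would unpack \eqref{e:elliptLS}: since $\bar z=(0,\bar y)$ with $|\bar y|=R$ and $v(x,y)=|y|^{2-m}\,e^{-f(x,|y|^2/(2N))}$, one has $R^{m-2}v(\bar z)=e^{-f(0,\tau)}=\tau^{n/2}u(0,\tau)$, so \eqref{e:elliptLS} reads
\[
u(0,\tau) \;=\; (4\pi\tau)^{-\frac{n}{2}}\int u(x,0)\,e^{-\frac{|x|^2}{4\tau}}\,dx.
\]
Applying the same identity to the translated solution $u(\,\cdot\,+y_0,\,\cdot\,)$, which is still a heat-equation solution, upgrades this to the full Gaussian convolution representation $u(\,\cdot\,,\tau) = u(\,\cdot\,,0)*k_\tau$ with $k_\tau(z):=(4\pi\tau)^{-n/2}e^{-|z|^2/(4\tau)}$.

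Second, I would turn elliptic monotonicity into the monotonicity of $\cW$ along the heat flow. Equation \eqref{e:key1} gives $\Delta(d_0\,v)/v = (m/b^2)|B|^2 + O(N^{-1}) \geq O(N^{-1})$, so $d_0\,v$ is asymptotically subharmonic on $\RR^m$. Passing to the $N\to\infty$ limit via \eqref{e:key3} then yields $\Box(D_0\,u)\leq 0$ (which is exactly \eqref{e:boxD0}, obtained here from the elliptic side). Integrating in $x$ and using the decay of $u$ gives $\partial_\tau \int D_0\,u\,dx \leq 0$, i.e., $\cW$ is non-increasing along the heat flow.

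Third, I would evaluate $\lim_{\tau\to\infty}\cW(u,\tau)$. By the convolution formula from Step 1, the parabolic rescaling $\tilde u_\tau(y) := \tau^{n/2}u(\sqrt\tau\, y,\tau)$ converges, via the Central Limit Theorem applied to $u_0*k_\tau$, to the unit heat kernel $k_1(y)$. Parabolic scale invariance of $\cW$, together with convergence of $\int |\nabla \tilde u_\tau|^2/\tilde u_\tau$ and $\int \tilde u_\tau\log\tilde u_\tau$ (by dominated convergence using the explicit Gaussian form of $\tilde u_\tau$), then gives $\cW(u,\tau)\to \cW(k_1,1) = n + \frac{n}{2}\log 4\pi$. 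Combined with Step 2,
\[
\cW(u,\tau_0) \;\geq\; \lim_{\tau\to\infty}\cW(u,\tau) \;=\; n + \tfrac{n}{2}\log 4\pi,
\]
which, via \eqref{e:logsobD0}, is the log Sobolev inequality.

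The main obstacle will be Step 3: upgrading pointwise convergence $\tilde u_\tau\to k_1$ to convergence of the Fisher information $\int|\nabla\tilde u_\tau|^2/\tilde u_\tau$ and the entropy $\int\tilde u_\tau\log\tilde u_\tau$. This demands matching tail and derivative estimates on the Gaussian convolution that dominate the integrands uniformly in $\tau$. Such bounds can be extracted directly from the representation $u=u_0*k_\tau$ by differentiating under the integral and using Gaussian moment bounds, but making the dominated-convergence argument airtight is the technical crux.
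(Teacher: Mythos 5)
Your overall architecture (monotonicity of $\cW$ plus convergence of a parabolic rescaling to the Gaussian) agrees with the paper's, and Steps 1 and 2 are essentially the same. But Step 3 diverges in a way that matters: you pass to the limit in the full $\cW$ functional, which forces you to control the Fisher information term $\int |\nabla \tilde u_\tau|^2/\tilde u_\tau$ as well as the entropy term — a delicate dominated-convergence problem that you correctly flag as the crux, since the integrand blows up where $\tilde u_\tau$ is small and tail estimates on the gradient are needed. The paper sidesteps this entirely. It introduces the shifted entropy $\tilde{\cS} = -\frac{n}{2}\log t - \int u\log u$ and uses the algebraic identity $\cW = (t\,\tilde{\cS})' + \frac{n}{2}$. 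Integrating monotonicity of $\cW$ gives
\[ T\,\tilde{\cS}(T) \leq t_0\,\tilde{\cS}(t_0) + (T - t_0)\,\bigl(\cW(t_0) - \tfrac{n}{2}\bigr), \]
so dividing by $T$ and letting $T\to\infty$ yields $\limsup_T \tilde{\cS}(T) \leq \cW(t_0) - \frac{n}{2}$. Thus the paper only needs the asymptotics of the entropy $\tilde{\cS}$ (a single dominated-convergence argument on $\int \tilde u\log\tilde u$, which is comparatively benign), not of the Fisher information. Your route is not wrong — the Fisher information does converge for these Gaussian convolutions — but it demands genuinely harder estimates that the paper's trick avoids; if you were to complete the proposal you should either supply those estimates or switch to the paper's entropy-only reduction.
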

 
 \begin{proof}
  Recall that the modified Shannon 
 entropy  is given by
\begin{align}
	\tilde{\cS} &= - \frac{n}{2} \, \log t  - \int u \, \log u \, .
\end{align}
 The first step is to show that the log Sobolev follows from 
 \begin{align}	\label{e:LSA2}
	\limsup_{t\to \infty} \tilde{\cS} \geq \frac{n}{2} + \frac{n}{2} \, \log (4\, \pi) \, .
\end{align}
The $\cW$ functional can be written as a   derivative by
 $
	\cW  =   (t\, \tilde{\cS})' + \frac{n}{2}$.
	Monotonicity of $\cW$, \cite{P},  was shown{\footnote{\cite{BR} assumed  $M$ was   closed;
	 monotonicity follows on $\RR^n$ (and more general spaces) from \eqr{e:key1}, \eqr{e:key2}.}} 
	 was shown to be a high-dimensional limit of \cite{C} 
in Theorem $4.10$ in \cite{BR}.
Fix some $t_0 > 0$. 
Monotonicity implies that $\cW (t) \leq \cW (t_0)$ for all $t \geq t_0$. Therefore, 
since $(t \, \tilde{\cS})' = \cW - \frac{n}{2}$, we get for 
 $T > t_0$  that
\begin{align}
	T \, \tilde{\cS}(T) \leq t_0 \,  \tilde{\cS}(t_0) + \int_{t_0}^T \left( \cW (t_0) - \frac{n}{2} \right) 
	\leq t_0 \,  \tilde{\cS}(t_0) +(T-t_0) \,  \left( \cW (t_0) - \frac{n}{2} \right) \, .
\end{align}
Combining this with the lower bound \eqr{e:LSA2} gives that 
\begin{align}
	\frac{n}{2} + \frac{n}{2} \, \log (4\, \pi) \leq \limsup_{t \to \infty}\, \tilde{\cS} \leq  \cW (t_0) - \frac{n}{2} \, ,
\end{align}
so   $\cW \geq \frac{n}{2} \, (2 + \log (4\pi))$, which is well-known to be equivalent to the log Sobolev inequality.

It remains to show that \eqr{e:elliptLS} implies \eqr{e:LSA2}.
Set $\tilde{u} (x,t) = t^{ \frac{n}{2}} \, u(\sqrt{t} \, x , t)$.  Note that the integral of $\tilde{u} $ must be one by using the change of variables formula (since the integral of $u$ is one).
Using the change of variables $x= \sqrt{t} \, \tilde{x}$, we see that 
\begin{align}
	\cS (t) &= - \int u(x,t) \, \log u (x,t) \, dx  =  - \int  \tilde{u}( \tilde{x},t) \, \log  [ t^{ -\frac{n}{2} } \tilde{u} ( \tilde{x},t)] \, d\tilde{x} \\
	&= \frac{n}{2} \, \log t \, \int \tilde{u}  - \int  \tilde{u} \, \log  \tilde{u} =  \frac{n}{2} \, \log t   - \int  \tilde{u} \, \log  \tilde{u} 
	\, . \notag 
\end{align}
We can apply \eqr{e:elliptLS}  to get that
 \begin{align}
	\tau^{ \frac{n}{2}} \, u (0,\tau) 
	=   \lim_{N \to \infty} \, R^{m-2} \, v(\bar{z}) 
	=  \left( 4 \, \pi \right)^{ - \frac{n}{2}}  \, \int u(x,0) \, \e^{ - \frac{|x|^2}{4 \, \tau}} \, dx 
	 \, .
\end{align}
Applying this to a translation of $u$ (which still satisfies the heat equation) gives that
 \begin{align}
	\left( 4 \, \pi \right)^{  \frac{n}{2}}   \,  \tilde{u} (x, \tau) = \left( 4 \, \pi \right)^{  \frac{n}{2}}   \, \tau^{ \frac{n}{2}} \, u (\sqrt{\tau} \, x,\tau) 
		=  \int u(\tilde{x}  ,0) \, \e^{ - \frac{|\tilde{x}-\sqrt{\tau} \, x|^2}{4 \, \tau}} \, d\tilde{x}
		= \e^{- \frac{|x|^2}{4}} \,  \int u(\tilde{x}  ,0) \, \e^{ -  \frac{|\tilde{x}|^2}{4\tau} +   \frac{\langle \tilde{x} , x \rangle}{2\sqrt{\tau}} } \, d\tilde{x}
	 \, . \notag
\end{align}
Thus, we see that
  $\tilde{u}$ goes to the standard gaussian $(4\, \pi)^{ - \frac{n}{2}} \, \e^{ - \frac{|x|^2}{4}}$, so the dominated convergence theorem gives that
\begin{align}
	\lim_{t \to \infty} \tilde{\cS}(t) = (4\, \pi)^{ - \frac{n}{2}} \, \int  \e^{ - \frac{|x|^2}{4}} \left( \frac{n}{2} \, \log (4\, \pi) + \frac{|x|^2}{4} \right) =
	\frac{n}{2} + \frac{n}{2} \, \log (4\, \pi) \, ,
\end{align}
 completing the proof.
 \end{proof}

\subsection{Volumes of spheres}

Recall that the volume of the $k$-dimensional unit sphere in $\RR^{k+1}$ is
$
\omega_{k}=\frac{2\,\pi^{\frac{k+1}{2}}}{\Gamma (\frac{k+1}{2})}$, 
where the Gamma function $\Gamma$ is given on positive integers by
$\Gamma (k)=(k-1)!$.

\begin{Lem}    \label{l:gamma}
If $m = n +N$ and $N$ is large, then 
\begin{align}
	N^{\frac{n}{2}}\, \frac{\omega_{m-1}}{\omega_{N-1}} & = (2\,\pi)^{\frac{n}{2}} + O(N^{-1}) 
	\,  .
\end{align}
\end{Lem}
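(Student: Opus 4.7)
The plan is to write the ratio explicitly using the stated formula $\omega_k = 2\pi^{(k+1)/2}/\Gamma((k+1)/2)$ and then apply Stirling's asymptotic for ratios of Gamma functions. Since the $2$'s in the numerator of $\omega_k$ cancel, I get
\begin{align*}
\frac{\omega_{m-1}}{\omega_{N-1}} = \pi^{n/2} \cdot \frac{\Gamma(N/2)}{\Gamma((N+n)/2)},
\end{align*}
so the claim reduces to showing
\begin{align*}
N^{n/2} \cdot \frac{\Gamma(N/2)}{\Gamma((N+n)/2)} = 2^{n/2} + O(N^{-1}).
\end{align*}

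The key input is the classical asymptotic expansion $\Gamma(x+a)/\Gamma(x) = x^a(1 + a(a-1)/(2x) + O(x^{-2}))$ as $x \to \infty$, which follows from Stirling's formula. Applied with $x = N/2$ and $a = n/2$, this gives
\begin{align*}
\frac{\Gamma((N+n)/2)}{\Gamma(N/2)} = (N/2)^{n/2}\bigl(1 + O(N^{-1})\bigr) = 2^{-n/2}\, N^{n/2}\bigl(1 + O(N^{-1})\bigr),
\end{align*}
where the implicit constant in $O(N^{-1})$ depends only on $n$. Inverting (which is legitimate for $N$ large since the factor is close to $1$) and multiplying by $N^{n/2}$ yields $2^{n/2}(1 + O(N^{-1})) = 2^{n/2} + O(N^{-1})$. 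Combined with the factor $\pi^{n/2}$ above, this gives $(2\pi)^{n/2} + O(N^{-1})$, as claimed.

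There is no real obstacle here; the only thing to be careful about is stating the Stirling-type expansion with an explicit error term (rather than a bare asymptotic equivalence), since the lemma records the $O(N^{-1})$ rate used elsewhere in the paper. If one preferred to avoid invoking Stirling, one could alternatively use the integer/half-integer case via $\Gamma(k+1/2) = \frac{(2k)!\sqrt{\pi}}{4^k k!}$ together with the standard estimate $\binom{2k}{k} = 4^k/\sqrt{\pi k}(1 + O(k^{-1}))$, but this only covers one parity of $n$ and is strictly less convenient than quoting the Gamma-ratio asymptotic directly.
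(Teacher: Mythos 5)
Your proposal is correct and takes essentially the same route as the paper: both reduce the ratio of sphere volumes to $\pi^{n/2}\,\Gamma(N/2)/\Gamma((N+n)/2)$ and then apply Stirling-type asymptotics. The only cosmetic difference is that you invoke the packaged Gamma-ratio expansion $\Gamma(x+a)/\Gamma(x) = x^a(1 + O(x^{-1}))$, whereas the paper applies Stirling's formula to each Gamma factor separately and then simplifies via $(1+n/N)^{-N/2} = e^{-n/2} + O(N^{-1})$; these are the same estimate in two different guises.
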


\begin{proof}
Stirling's formula gives that $\Gamma (k) = \sqrt{2\pi/k} \, (k/\e)^k \, (1+ O(k^{-1}))$. Thus, we have
\begin{align}
 \frac{\omega_{m-1}}{\omega_{N-1}} &=\frac{\pi^{\frac{n}{2}}\,\Gamma (\frac{N}{2})}{\Gamma(\frac{N+n}{2})}=
 \frac{\pi^{\frac{n}{2}}\,   \sqrt{N+n} \, (N/2\e)^{N/2} }{  \sqrt{N} \, ((N+n)/2\e)^{(N+n)/2}     }  \, (1+ O(N^{-1}))  
 = (1+O(N^{-1})) \, \left( \frac{2\, \pi}{N} \right)^{ \frac{n}{2}}
 \,  , \notag
\end{align}
where the last equality also used that $(1+n/N)^{-N/2} = \e^{- n/2} + O(N^{-1})$.
\end{proof}

\begin{Lem}   \label{c:taylor}
If   $2\leq N$ and $0 \leq \delta < N$, then
$\left(1-\frac{\delta}{N}\right)^{\frac{N}{2}}
\leq \e^{-\frac{\delta}{2}}   \, .$
\end{Lem}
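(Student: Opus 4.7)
\medskip

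\noindent\textbf{Proof plan for Lemma \ref{c:taylor}.}

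The plan is to reduce the claim, by taking logarithms, to the elementary inequality $\log(1-x) \leq -x$ on $[0,1)$. Setting $x = \delta/N \in [0,1)$, the target bound
\begin{equation*}
\left(1-\frac{\delta}{N}\right)^{\frac{N}{2}} \leq \e^{-\frac{\delta}{2}}
\end{equation*}
is equivalent (after taking $\log$ of both positive sides and multiplying by $2/N > 0$) to
\begin{equation*}
\log\left(1 - \frac{\delta}{N}\right) \leq -\frac{\delta}{N} \, .
\end{equation*}

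The second step is to prove $\log(1-x) \leq -x$ for $x \in [0,1)$. I would define $\varphi(x) = -x - \log(1-x)$ on $[0,1)$ and observe that $\varphi(0) = 0$ and
\begin{equation*}
\varphi'(x) = -1 + \frac{1}{1-x} = \frac{x}{1-x} \geq 0 \, ,
\end{equation*}
so $\varphi$ is nondecreasing and hence $\varphi(x) \geq 0$ on $[0,1)$. Substituting $x = \delta/N$ yields the required inequality, and exponentiating both sides after multiplying by $N/2$ concludes.

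There is no real obstacle here: the statement is a one-line consequence of the concavity of $\log$ (equivalently, of the tangent-line inequality at $x=0$ for $\log(1-x)$). The hypothesis $N \geq 2$ plays no role beyond ensuring that $N/2 \geq 1$ makes the exponent meaningful, and the hypothesis $\delta < N$ is exactly what is needed so that $1 - \delta/N > 0$ and the logarithm is defined.
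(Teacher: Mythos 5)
Your proof is correct and takes essentially the same approach as the paper: you reduce to the tangent-line inequality $\log(1-x)\leq -x$ on $[0,1)$ and verify it via the monotonicity of $\varphi(x) = -x - \log(1-x)$, which is precisely the negative of the auxiliary function $h(y)=y+\log(1-y)$ used in the paper. The only difference is a sign convention.
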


\begin{proof}
Define $h(y) = y + \log (1-y) $ for $y \in [0,1)$ and observe that $h(0) = 0$ and
 \begin{align}
 	h'(y) = 1 - \frac{1}{1-y} = \frac{-y}{1-y} \leq 0 \, , 
 \end{align}
 so we have that $h\leq 0$ on $[0,1)$.  Setting $y= \frac{\delta}{N}$ gives that
 \begin{align}
 	\frac{\delta}{N} + \log (1 - \delta/N) \leq 0 \, , 
 \end{align}
 which gives that $\frac{N}{2} \, \log (1- \delta/N) \leq - \frac{\delta}{2}$.  Exponentiating this gives the   claim.
\end{proof}

 As a consequence, as observed by Poincar\'e, the projection of the uniform probability measure on $\SS_{\sqrt{2N}}^{m-1}$ to $\RR^n$ converges to the standard gaussian measure.  This follows since
\begin{align}
	\frac{ (2N)^{ - \frac{m-1}{2}} }{\omega_{m-1}} \, \int_{ \SS_{\sqrt{2N}}^{m-1} } f(x) &= 
	\frac{ (2N)^{ - \frac{m-1}{2}} }{\omega_{m-1}} \, \int_{  \{   |x|^2 \leq 2\, N\} } f(x) \, \Vol \left( \{   |y|^2 = 2\, N - |x|^2 \} \right) \notag \\
	&= \frac{ (2N)^{ - \frac{m-1}{2}}\, \omega_{N-1} }{\omega_{m-1}} \, \int_{  \{   |x|^2 \leq 2\, N\} } f(x) \, \left(   2\, N - |x|^2  \right)^{ \frac{N-1}{2}} \\
	&= \frac{ (2N)^{ - \frac{n}{2}}\, \omega_{N-1} }{\omega_{m-1}} \, \int_{  \{   |x|^2 \leq 2\, N\} } f(x) \, \left(  1 - \frac{|x|^2}{2\, N}  \right)^{ \frac{N-1}{2}} 
	\, . \notag
\end{align}
Finally, Lemmas \ref{l:gamma} and  \ref{c:taylor}  give that
\begin{align}
	 \frac{ (2N)^{ - \frac{n}{2}}\, \omega_{N-1} }{\omega_{m-1}}  \to (4\, \pi)^{ - \frac{n}{2}}  {\text{ and }}
 \left(  1 - \frac{|x|^2}{2\, N}  \right)^{ \frac{N-1}{2}}  \to \e^{ - \frac{|x|^2}{4}} \, .	\notag
\end{align}

\end{document}